\documentclass[10pt, reqno]{amsart}

\usepackage{amsmath, amsthm, amssymb}
\usepackage{url}
\usepackage[shortlabels]{enumitem}

\usepackage{amsfonts}
\usepackage{bm}

\usepackage{tikz}
\usetikzlibrary{arrows, cd}
\usepackage[all,cmtip]{xy}
\usepackage{mathtools}
\usepackage{adjustbox}

\usepackage{hyperref}

\newcommand{\PP}{\mathbf P}

\newcommand{\AAA}{\mathbf A}

\newcommand{\nn}{\mathbf n}
\newcommand{\rr}{\mathbf r}
\newcommand{\dd}{\mathbf d}

\def\cC{{\mathcal C}}

\def\cE{{\mathcal E}}
\def\cF{{\mathcal F}}

\def\cL{{\mathcal L}}

\def\cO{{\mathcal O}}

\def\PC{\operatorname{PC}}
\def\rank{\operatorname{rank}}
\def\ord{\operatorname{ord}}
\def\dim{\operatorname{dim}}



\newcommand{\VO}{\operatorname{VO}}
\newcommand{\RO}{\operatorname{RO}}

\newtheorem{theorem}{Theorem}

\newtheorem{lemma}[theorem]{Lemma}

\theoremstyle{definition}
\newtheorem{definition}[theorem]{Definition}
\newtheorem{example}[theorem]{Example}

\theoremstyle{remark}

\numberwithin{equation}{section}

\newcommand{\SG}{\operatorname{SG}}

\def\Euc{\operatorname{Euc}}
\def\CW{\operatorname{CW}}
\def\GCD{\operatorname{GCD}}
\def\frack{\operatorname{frac}}


\begin{document}

\title[The structural invariants of Goursat distributions]{The structural invariants \\ of Goursat distributions}

\author[S.\ J.\ Colley]{Susan Jane Colley}
\address{Department of Mathematics,
Oberlin College, Oberlin, Ohio 44074, USA}
\email{scolley@oberlin.edu}

\author[G.\ Kennedy]{Gary Kennedy}
\address{Ohio State University at Mansfield, 1760 University Drive,
Mansfield, Ohio 44906, USA}
\email{kennedy@math.ohio-state.edu}

\author[C.\ Shanbrom]{Corey Shanbrom}
\address{California State University, Sacramento, 6000 J St., Sacramento, CA 95819, USA}
\email{corey.shanbrom@csus.edu}


\begin{abstract}
This is the first of a pair of papers devoted to the local invariants of Goursat distributions.
The study of these distributions naturally
leads to a tower of spaces over an arbitrary surface,
called the monster tower,
and thence to connections with the topic
of singularities of curves on surfaces.
Here we study those invariants of Goursat distributions
akin to those of curves on surfaces,
which we call structural invariants.
In the subsequent paper we will relate these structural
invariants to the small growth invariants.
\end{abstract}

\subjclass{58A30, 14H20, 53A55, 58A15}


\date{\today}
\maketitle


\section{Introduction} \label{intro}
This is the first of a pair of papers devoted to the local invariants of Goursat distributions.
A \emph{distribution} $D$ on a smooth manifold $M$ is a subbundle of the tangent bundle $TM$.
It is called \emph{Goursat}
if the \emph{Lie square sequence}
\[ 
D=D_{1} \subset D_{2} \subset D_{3} \subset \cdots 
\]
(as defined in Section~\ref{Goursatdef})
is a sequence of vector bundles for which
$$
\rank D_{i+1} =1+\rank D_i
$$
until one reaches the full tangent bundle.
As realized by Montgomery and Zhitomirski \cite{MR1841129},
the study of Goursat distributions naturally
leads to a tower of spaces over an arbitrary surface,
called the monster tower,
and thence to connections with the topic
of singularities of curves on surfaces.
In the local study of such curves and in the local study of
Goursat distributions, an important role
is played by invariants:
for curves on surfaces there are many
well-studied invariants,
such as those listed on page 85 of \cite{MR2107253},
whereas for Goursat distributions
we have notions related to the
analysis of the small growth sequence,
which we define in Section \ref{sgs}.
(We spell out precisely what we mean by ``invariant''
in Sections~\ref{Goursatdef} and \ref{fcg}.)

The aims of this pair of papers, taken together, are
\begin{enumerate}
\item \label{cli}
To give a systematic
account of those invariants of Goursat distributions
akin to those of curves on surfaces, including the
Puiseux characteristic ---
we call them \emph{structural invariants};
\item \label{sgi}
To explain how they lead to the standard
invariants of the small growth sequence:
the small growth vector,
Jean's beta vector (\cite{MR1411581}), and its derived vector;
\item \label{recur}
To present effective recursive methods for calculation.
\end{enumerate}

This paper is devoted to aim \ref{cli}
and the subsequent paper to aim \ref{sgi}\,;
our third aim is addressed throughout both.
Our analysis uses the monster tower,
which ties together the notions of prolonging a Goursat distribution
and lifting a curve.
Our starting point is the RVT codes,
which provide a way of assigning
a code word to each germ of a Goursat distribution
and to each irreducible curve germ.
Furthermore, our analysis is recursive.
In fact, there are two compatible notions of recursion in play,
which we call front-end
and back-end recursions.

Since we use the monster tower,
our account of invariants of curves on surfaces
is based on the idea of Nash modifications.
Specialists in singularity theory tend to be more familiar
with the theory of point blowups.
(Intuitively, Nash modification
uses tangent lines, whereas ordinary blowing-up
uses secant lines.)
Most of our basic definitions and all of our structural invariants
can be interpreted in the alternative framework
of embedded blowups;
Section~\ref{blowup} briefly explores this approach.
We have tried to give a relatively self-contained
exposition, and thus we sometimes offer
our own versions of proofs already found
elsewhere, especially in  \cite{MR1841129}.
We do this, in part, so as to provide convenient references
for our subsequent paper.
Our other chief references are  \cite{MR2599043},
\cite{MR2172057}, and \cite{MR2107253}. 

Figure \ref{invdiagram}
shows the invariants we will analyze in the two papers;
we will also explain the relationships indicated by the arrows.
Figure \ref{exdiagram} presents an example, using the
same layout.

%
%
\begin{figure}[!htb] \label{diaginv}

\[   \begin{tikzcd}[sep=0pt]
\fbox{RVT code word} \arrow[rr, leftrightarrow]
\arrow[dd] \arrow[dr, leftrightarrow] &   &
\fbox{Puiseux characteristic} 
\arrow[dl, leftrightarrow] \arrow[dd] \\  & 
\fbox{$
\substack{\text{\normalsize{Multiplicity sequence}} \\[3pt] \text{\normalsize{Vertical orders vector}}}
$}
  &    \\
\fbox{$
\substack{\text{\normalsize{Goursat} }\\[2pt] \text{\normalsize{code word}}} 
$}
\arrow[rr, leftrightarrow]  \arrow[dr, leftrightarrow] &   & 
\fbox{$
\substack{\text{\normalsize{Restricted}} \\[2pt] \text{\normalsize{Puiseux characteristic}}} 
$}
\arrow[dl, leftrightarrow] \\   &   
\fbox{$
\substack{\text{\normalsize{Small growth vector}} \\[3pt] \text{\normalsize{Jean's beta vector}} \\[3pt] \text{\normalsize{Derived vector}} \\[3pt] \text{\normalsize{Second derived vector}} \\[3pt] \text{\normalsize{Restricted vertical orders vector}}} 
$}
\arrow[uu, leftarrow, crossing over]  &   
\end{tikzcd} \]
\caption{The invariants listed in the top three boxes
are invariants of 
points on monster spaces
(as defined in Section~\ref{MT})
and of
focal curve germs 
(as defined in Section~\ref{fcg}),
akin to invariants of curves on surfaces;
the others are invariants of germs of Goursat distributions (as defined in Section~\ref{Goursatdef}).
The invariants listed in the bottom box are invariants obtained by
considering the small growth sequence
(as defined in Section~\ref{sgs}).}
\label{invdiagram}
\end{figure}

%
%
\begin{figure}[!t]
\begin{center}
\includegraphics[scale=0.85]{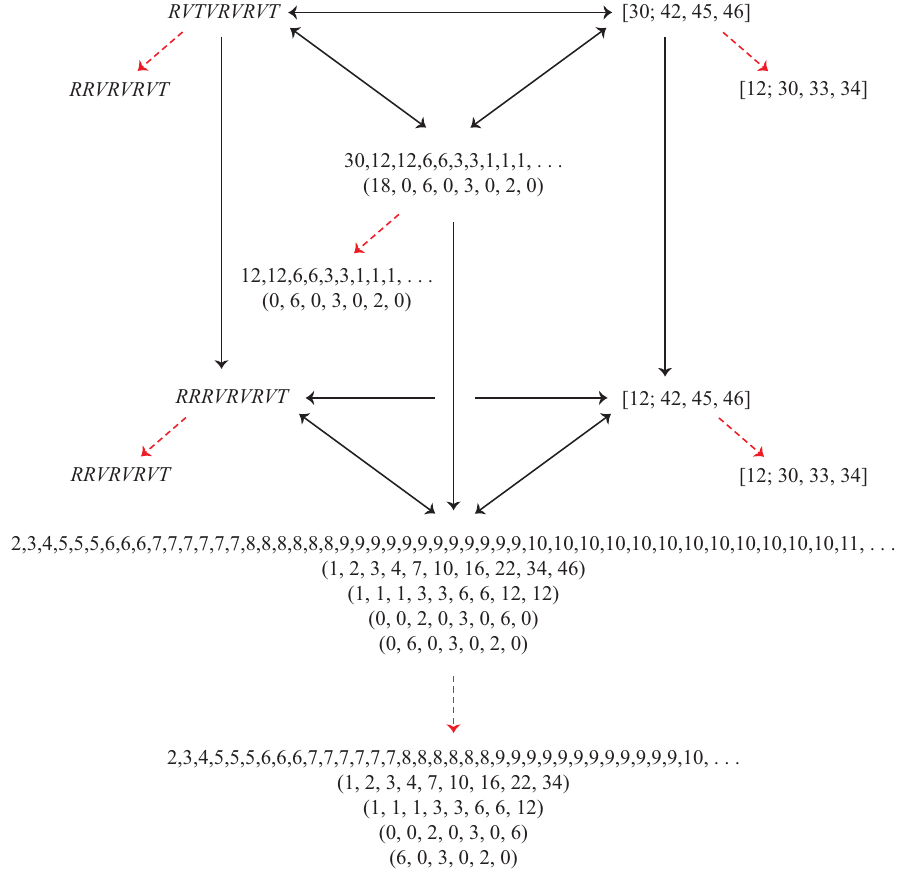}
\end{center}
\caption{An example of the invariants of Figure~\ref{diaginv}.
The dashed arrows indicate compatible front-end recursions.}
\label{exdiagram}
\end{figure}


Section~\ref{Goursatdef} recalls the basic definitions of Goursat
distributions, and Section~\ref{RVTGoursat}
explains how their basic structural theory leads to 
code words in the alphabet $R$, $V$, $T$.
Section~\ref{PMC} reviews the notion of prolongation,
while clarifying a ubitiquous construction
in differential geometry which seems never to have been
baptized; we coin the term \emph{extension}.
Section~\ref{MT} introduces the monster tower,
while Section~\ref{universality}, following \cite{MR1841129},
explicates how it is universal for Goursat distributions.
Section~\ref{fcg} explains the terminology of
focal curve germs.
Section~\ref{monstercoordinates}
explains how one naturally introduces
coordinates on the spaces of the monster tower.
Section~\ref{strata} gives
the basic stratification theory of divisors at infinity and their prolongations,
while Section~\ref{redux} uses this theory to
enlarge the scope of the theory of code words.
The brief Section~\ref{blowup}
is something of a digression:
it explores how several of the notions of the prior sections
can be interpreted using embedded resolution
via point blowups.
With Section~\ref{PC},
we begin to look at the structural invariants;
this section is devoted to the Puiseux characteristic,
while the subsequent 
Sections~\ref{multseq} and \ref{vert}
handle the multiplicity sequence
and the \emph{vertical orders vector} (again a new coinage).
The final Section~\ref{sgs}
is a brief introduction to the small growth sequence,
whose invariants will be the subject of our subsequent paper.

We thank Richard Montgomery, Piotr Mormul, Lee McEwan, Justin Lake, Tom Ivey,
and Fran Presas for valuable discussions.

\section{Goursat distributions} \label{Goursatdef}

We begin with a smooth manifold $M$ of dimension $m \ge 2$.
Let $D$ be a distribution on $M$, i.e., 
a subbundle of its tangent bundle $TM$.
Let $\cE$ be its sheaf of sections,
which is a subsheaf of the sheaf $\Theta_M$
of sections of $TM$.
In other words, let $\Theta_M$
be the sheaf of vector fields on $M$,
and let $\cE$ be the subsheaf of
vector fields
tangent to $D$.

The \emph{Lie square} of $\cE$ is
\[
\cE_{2}=[\cE,\cE],
\]
meaning the subsheaf of $\Theta_M$ whose sections are generated by
Lie brackets of sections of $\cE$ and the sections of $\cE$ itself.
Note that in general the rank of $\cE_{2}$
may vary from point to point.
If, however, $\cE_{2}$
is the sheaf of sections of a distribution $D_{2}$,
then this rank is constant.
Beginning with $\cE_{1}=\cE$,
recursively we define $\cE_{i+1}$ to be the Lie square of $\cE_{i}$,
and we call 
\begin{equation} \label{Liesquaresheaves}
\cE_{1} \subseteq \cE_{2} \subseteq \cE_{3} \subseteq \cdots 
\end{equation}
the \emph{Lie square sequence}.

Let $d$ be the rank of $D$.
We say that $D$ is a \emph{Goursat distribution}
if each sheaf $\cE_{i}$ is the sheaf of sections
of a distribution $D_{i}$
and if
$$
\rank D_{i+1} = 1 + \rank D_i
$$
for $i=1,\dots,m-d$.
In particular $D_{m-d+1}$ is the tangent bundle $TM$.
For a Goursat distribution, the sequence 
\[ 
D=D_{1} \subset D_{2} \subset D_{3} \subset \cdots \subset D_{m-d} \subset D_{m-d+1} = TM
\]
is also called the Lie square sequence.
Since a distribution of rank 1 is always integrable,
a Goursat distribution necessarily has rank at least 2.

In the literature on Goursat distributions, it is generally
assumed that the corank $m-d$
is at least 2. We find it more convenient to allow
these trivial cases:
\begin{itemize}
\item
$D=TM$,
\item
$D$ has corank 1, and its Lie square is $TM$,
\end{itemize}
and call all other cases \emph{nontrivial}.

Since we are interested in local issues,
we will work with germs of distributions.
A \emph{Goursat germ}
consists of a Goursat distribution $D$ on a manifold $M$ and a point $p \in M$;
if necessary for our constructions we will replace $M$ by a  neighborhood of $p$.
We say that the germ is \emph{located at} $p$.
Suppose that $(M,D)$ and $(M',D')$ are Goursat germs.
A local diffeomorphism from $M$ to $M'$ taking
$p$ to $p'$ is said to be an \emph{equivalence
of Goursat germs}
if its derivative takes $D$ to $D'$.
An \emph{invariant of Goursat germs} is a function
on the set of equivalence classes. Note that passage to a smaller
neighborhood does not change an invariant.
See Section~\ref{fcg} for further remarks about
Goursat invariants and their relation to focal
curve germ invariants (as defined therein).

\section{Code words of Goursat germs} \label{RVTGoursat}

Fix a Goursat germ $D$ at the point $p\in M$.
Let $d \ge 2$ be the rank of $D$
and let $m \ge d$ be the dimension of $M$.
We give here a self-contained account of how one defines
the \emph{Goursat code word} of $D$ at $p$;
this is
a word of length $m-d$ in the symbols $R$, $V$, and $T$,
which stand for \textit{regular}, \textit{vertical}, and \textit{tangent}.
(In the literature on Goursat distributions, these may be called
RVT code words, but we prefer to reserve that usage for the
slightly more general code words defined in Section~\ref{redux}.)
Our account draws upon four sources:
Section 9.1 of \cite{MR704040}
lays out the basic structural possibilities
for Goursat germs;
\cite{MR2172057} uses these possibilities to attach a code word
(using $G$ rather than $R$ and $S$ rather than $V$),
invoking the Sandwich Diagram from \cite{MR1841129};
the monograph
\cite{MR2599043} introduces the now-standard notation.


For a distribution $D$, let $\cL(D)$ denote its \textit{Cauchy characteristic}.
This is the sheaf of vector fields $v$ in $D$ that preserve $D$, meaning that  $[v, D] \subset D$.  
If $\cL(D)$ is of constant rank, then it is a subdistribution of $D$,
and we mildly abuse notation by writing $\cL(D)$ for both
the distribution and its sheaf of sections.

\begin{lemma}[Sandwich Lemma of \cite{MR1841129}] 
For a nontrivial Goursat distribution $D$,
its Cauchy characteristic has constant rank
and is therefore the sheaf of sections of a subdistribution.
We have
$$\cL(D) \subset \cL(D_2) \subset D$$
and both inclusions are of corank one.
\end{lemma}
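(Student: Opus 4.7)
The strategy is to realize both $\cL(D)$ and $\cL(D_2)$ as kernels of tensorial bracket maps and then use the Jacobi identity to pin down their ranks. Because the Goursat germ is nontrivial, $D \subsetneq D_2 \subsetneq D_3$, so $D_2/D$ and $D_3/D_2$ are line bundles. The bracket descends to an alternating tensor
$$ \omega_1 \colon D \wedge D \longrightarrow D_2/D, \qquad (v,w) \mapsto [v,w] \bmod D, $$
whose annihilator in $D$ is precisely $\cL(D)$. Choose locally a section $e$ of $D_2$ whose class generates $D_2/D$; the map
$$ \beta \colon D \longrightarrow D_3/D_2, \qquad w \mapsto [e,w] \bmod D_2, $$
is also tensorial (using $D \subset D_2$), and a direct computation from the definition of $D_3$ shows it surjects onto $D_3/D_2$, hence is nowhere zero.

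First I would identify $\cL(D_2)$ with $\ker \beta \subset D$. For any section $v$ of $D_2$, write $v = v_0 + \lambda e$ with $v_0$ a section of $D$; for $w \in D$ one finds $[v,w] \equiv \lambda\,\beta(w) \pmod{D_2}$, since $[v_0,w] \in [D,D] \subset D_2$ and $w(\lambda) e \in D_2$. If $v \in \cL(D_2)$ then $[v,D] \subset D_2$, forcing $\lambda\beta \equiv 0$; because $\beta$ is nowhere zero, $\lambda \equiv 0$, i.e., $v \in D$. Conversely, for $v \in D$ the condition $[v,D_2] \subset D_2$ reduces to $[v,e] \in D_2$, i.e., to $\beta(v) = 0$. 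Hence $\cL(D_2) = \ker \beta$ is a constant-rank subdistribution of $D$ of corank one.

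Next I would use Jacobi to constrain $\omega_1$. Trivializing $D_2/D$ by a section $\eta$, write $\omega_1 = \alpha \otimes \eta$ with $\alpha \in \Lambda^2 D^*$. For $u,v \in D$, $[u,v] = \alpha(u,v) e + (\text{section of } D)$, so modulo $D_2$ one computes $[[u,v],w] \equiv \alpha(u,v)\beta(w)$. The Jacobi identity $[[u,v],w] + [[v,w],u] + [[w,u],v] = 0$ then yields
$$ \alpha(u,v)\beta(w) + \alpha(v,w)\beta(u) + \alpha(w,u)\beta(v) = 0, $$
which is precisely $\alpha \wedge \beta = 0$ in $\Lambda^3 D^*$. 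Since $\beta \neq 0$ this forces $\alpha = \gamma \wedge \beta$ for some 1-form $\gamma$ on $D$, and $\alpha \neq 0$ (because $\omega_1$ is surjective onto $D_2/D$) makes $\beta, \gamma$ pointwise linearly independent. Consequently $\cL(D) = \ker \alpha = \ker \beta \cap \ker \gamma$ is a constant-rank subdistribution of $D$ of corank two, contained in $\cL(D_2) = \ker \beta$ with corank one. The weaker inclusion $\cL(D) \subset \cL(D_2)$ is also visible directly from Jacobi: for $v \in \cL(D)$ and $w_1,w_2 \in D$, $[v,[w_1,w_2]] = [[v,w_1],w_2] + [w_1,[v,w_2]] \in [D,D] \subset D_2$.

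The crux is the rank computation of $\omega_1$, and the whole argument hinges on the Jacobi-derived identity $\alpha \wedge \beta = 0$; the nontriviality hypothesis is exactly what is needed to keep $\beta$ nonzero so that this identity actually pins down $\alpha$ and, through it, the ranks of both Cauchy characteristics.
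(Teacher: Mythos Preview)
Your argument is correct and complete. Both you and the paper identify $\cL(D_2)$ and $\cL(D)$ as kernels of tensorial bracket forms, but the route to pinning down the rank of $\cL(D)$ differs. The paper works pointwise with the skew form $D(p)\times D(p)\to D_2/D(p)$ and invokes the linear-algebra fact that a nontrivial alternating form with an isotropic hyperplane has kernel of codimension one in that hyperplane; the isotropic hyperplane is $\cL(D_2)(p)$, whose isotropy follows from the (standard, Jacobi-based) involutivity of Cauchy characteristics. You instead make the Jacobi identity explicit as the tensorial relation $\alpha\wedge\beta=0$ in $\Lambda^3 D^*$, from which $\alpha=\gamma\wedge\beta$ and the corank statements follow immediately. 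The two are equivalent---``$\alpha$ vanishes on $\ker\beta$'' is exactly ``$\alpha=\gamma\wedge\beta$''---but your packaging has the advantage of displaying the precise algebraic link between the two bracket tensors, while the paper's version is slightly more conceptual and avoids choosing the splitting section $e$.
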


\begin{proof}
We begin with some remarks on a skew-symmetric bilinear form
$\omega$
on a finite-dimensional vector space $V$. We say that
a subspace $W$ is \emph{isotropic} if the restriction
of $\omega$ to $W$ is trivial. (Surprisingly, this is
not fully standard terminology; it is found, however,
in \cite[Definition 1.2.4]{MR3930627}.) 
By the classification theory of such forms,
as found in \cite[Section 10.3]{MR0276251}
or \cite[Section 1.1]{MR1853077}, one infers that if
there is an isotropic subspace $W$ of codimension one
and if $\omega$ is nontrivial, then the kernel
of $\omega$ has codimension one in $W$.

Consider the map
\[
D_2 \times D_2 \to D_3/D_2
\]
obtained by composing the Lie bracket with the quotient map.
Choose a local basis of sections for $D_2$
and restrict this map to the fibers
at a point $p$:
\begin{equation}\label{ss1}
D_2 (p) \times D_2 (p) \to D_3/D_2 (p)
\end{equation}
Since $D_3/D_2 (p)$ is one-dimensional,
this is a skew-symmetric bilinear form.
The subspace $D(p)$ is isotropic and
of codimension one inside $D_2(p)$;
thus the kernel $\cL(D_2)(p)$ of (\ref{ss1}) has
codimension one inside $D(p)$.

Similarly consider the skew-symmetric bilinear form
\begin{equation}\label{ss2}
D (p) \times D (p) \to D_2/D (p)
\end{equation}
and observe that the subspace $\cL(D_2)(p)$
is isotropic and of codimension one inside $D(p)$;
thus the kernel $\cL(D)(p)$ of (\ref{ss2}) has
codimension one inside $\cL(D_2)(p)$.
\end{proof}

The Sandwich Lemma can be applied to every nontrivial member of the Lie square sequence
of a Goursat distribution to yield the following diagram, in which each inclusion has corank one.

\begin{equation}\label{sandwich}
\begin{array}{ccccccccccccc} 
 &  & D=D_1 & \!\!\!\!\subset\!\!\!\! & D_2 & \!\!\!\!\subset\!\!\!\! & \!\!\cdots\!\! & \!\!\!\!\subset\!\!\!\! & D_{m-d-1} & \!\!\!\!\subset\!\!\!\! & D_{m-d} & \!\!\!\!\subset\!\!\!\! & TM \\[10pt]
 &  & \cup & & \cup &  &  &  & \cup &  &  &  \\[10pt]
\cL(D_1) & \!\!\!\!\subset\!\!\!\! &  \cL(D_2) & \!\!\!\!\subset\!\!\!\! & \cL(D_3) &\!\!\!\!\subset\!\!\!\! & \!\!\cdots\!\! & \!\!\!\!\subset\!\!\!\! & \cL(D_{m-d}) &\ \
\end{array}
\end{equation}
By the Jacobi identity, each $\cL(D_i)$ is involutive, so the Frobenius theorem shows that it induces a foliation, called a \textit{characteristic foliation}.

To define the Goursat code word, we associate 
a symbol with each square 
\begin{equation}\label{littlesandwich}
\begin{array}{cccc} 
D_i & \!\!\!\!\subset\!\!\!\! & D_{i+1}  \\[10pt]
\ \ \cup & \ \ & \cup  \ \ \\[10pt]
\cL(D_{i+1}) & \!\!\!\!\subset\!\!\!\! & \cL(D_{i+2}) &\ \
\end{array}
\end{equation}
of diagram~(\ref{sandwich})
and then read off the word from right to left.
We also associate the symbol $R$ with each of the 
two incomplete squares at the right end, so that 
each Goursat code word begins with $RR$.
Thus the symbol associated with the square of (\ref{littlesandwich})
will be the symbol of the code word in position $m-d-i+1$.

To begin, looking at the fibers of our sheaves at $p$, we remark that
we know two natural fillings $W$ for the sandwich
$$ \cL(D_{i+1})(p) \subset W \subset D_{i+1}(p),$$
since we could use either the hyperplane $W=D_{i}(p)$ or 
the hyperplane $W=\cL(D_{i+2})(p)$.
We will be particularly interested in the case where the two fillings coincide.

\begin{definition}\label{def: singular}   If $D_{i}(p)=\cL(D_{i+2})(p)$ for some $i\in\{1, 2, \dots, m-d-2\}$, then we say that $D$ is \emph{singular} at $p$.
\end{definition}

If $D$ is not singular at $p$, then we associate to it the code word of length $m-d$ consisting entirely of the letter $R$.  
In the singular case, we find it convenient to introduce coordinates;
these coordinates were employed in \cite{MR704040},
and later authors call them \emph{Kumpera--Ruiz coordinates}.

\begin{lemma} \label{sandwichequations}
Suppose that $D$ is a nontrivial Goursat germ on $M$ at $p$.
Consider these sheaves:
\begin{equation*}
\begin{array}{ccccc} 
&& D & \!\!\!\!\subset\!\!\!\! & \phantom{D}D_{2}  \\[10pt]
&&  \cup & \ \ &   \ \ \\[10pt]
\cL(D) & \subset & \cL(D_{2}) &  &\ \
\end{array}
\end{equation*}
\begin{enumerate}
\item \label{ld2equations}
There is an ordered pair of coordinate functions $x$ and $y$, part of a
system of local coordinates,
for which the sheaf $\cL(D_{2})$ is defined
inside $D_2$ 
by $dx=dy=0$.
\item \label{p1}
For any such pair of coordinates, either
there is
a third coordinate $y'$
and a constant $C$ so that
$D$ is defined inside $D_2$
by $dy=(C+y')dx$
(we call this the \emph{ordinary} situation)
or there is a third coordinate $x'$
so that $D$ is defined inside $D_2$
by $dx=x'dy$
(we call this the \emph{inverted} situation).
\item \label{ldequations}
In the ordinary situation, the sheaf $\cL(D)$
is defined inside $D_2$ by $dx=dy=dy'=0$,
and in the inverted situation by $dx=dy=dx'=0$.
\end{enumerate}
\end{lemma}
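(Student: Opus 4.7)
The plan is to prove the three parts in order, with the Sandwich Lemma and the Frobenius theorem as the main tools.

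For part~(\ref{ld2equations}), since $\cL(D_2)$ is involutive of constant rank $d-1$, the Frobenius theorem yields local coordinates $z_1,\dots,z_m$ near $p$ in which $\cL(D_2) = \bigcap_{i\ge d}\ker dz_i$. The Sandwich Lemma places $\cL(D_2)$ at corank $2$ inside $D_2$, so the restrictions $dz_i|_{D_2}$ for $i\ge d$ span a $2$-dimensional subspace of $D_2^*$. After a linear change of $z_d,\dots,z_m$, I may take $x = z_d$ and $y = z_{d+1}$, and a rank count shows that their common kernel inside $D_2$ is exactly $\cL(D_2)$ in a neighborhood of $p$.

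For part~(\ref{p1}), given any such $x,y$ the quotient $D_2/\cL(D_2)$ has rank $2$ with $dx$ and $dy$ descending to a dual framing. Since $D$ has corank $1$ in $D_2$ and contains $\cL(D_2)$, there exist smooth functions $a,b$, not simultaneously vanishing, with $D$ defined inside $D_2$ by $\omega := a\,dx + b\,dy = 0$. In the \emph{ordinary} case $b(p)\ne 0$, I normalize $b\equiv 1$ near $p$, set $C = -a(p)$ and $y' = a(p) - a$, giving $dy = (C+y')\,dx$ with $y'(p) = 0$; in the \emph{inverted} case $b(p) = 0$ (so $a(p)\ne 0$), I normalize $a\equiv 1$ and set $x' = -b$, giving $dx = x'\,dy$ with $x'(p) = 0$. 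It remains to see that $y'$ (resp.\ $x'$) extends to a coordinate system, i.e., that $da(p)$ (resp.\ $db(p)$) is not in the span of $dx(p), dy(p)$. Differentiating $\omega$ gives $d\omega = da\wedge dx + db\wedge dy$, and for $v\in\cL(D_2)$ (so that $dx(v) = dy(v) = 0$) the condition $\iota_v d\omega|_D = 0$ reduces, after normalization, to $da(v) = 0$ in the ordinary case (resp.\ $db(v) = 0$ in the inverted case); thus $\cL(D) = \{v\in\cL(D_2) : da(v) = 0\}$ (resp.\ with $db$). The second Sandwich inclusion $\cL(D)\subsetneq\cL(D_2)$ of corank $1$ then produces some $v\in\cL(D_2)$ with $da(v)\ne 0$, giving the required independence.

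For part~(\ref{ldequations}), the preceding computation already identifies $\cL(D)$ inside $\cL(D_2)$ as the kernel of $da$ (resp.\ $db$). Since $dy' = -da$ (resp.\ $dx' = -db$) and $\cL(D_2)$ is itself cut out inside $D_2$ by $dx = dy = 0$, the equations $dx = dy = dy' = 0$ (resp.\ $dx = dy = dx' = 0$) cut out $\cL(D)$ inside $D_2$, as claimed.

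The main obstacle is the coordinate claim in part~(\ref{p1}): it is the only step using the Goursat/Sandwich structure beyond linear algebra and Frobenius. Translating the abstract inclusion $\cL(D)\subsetneq\cL(D_2)$ into the concrete nonvanishing of $da$ transverse to the $(dx,dy)$-plane requires the curvature computation above, together with care about the normalization of $a$ and $b$ to get a clean statement.
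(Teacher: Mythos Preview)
Your proof is correct and follows essentially the same route as the paper: Frobenius for part~(\ref{ld2equations}), the defining $1$-form $\omega = a\,dx + b\,dy$ for part~(\ref{p1}), and the curvature $d\omega$ for part~(\ref{ldequations}). The paper's own argument is terser: for part~(\ref{p1}) it simply asserts that $y'$ (resp.\ $x'$) is a ``coordinate function'' without justification, and for part~(\ref{ldequations}) it says the computation ``can be computed directly'' and otherwise defers to \cite[p.~463]{MR1841129} and the Retraction Theorem. Your treatment is more self-contained on both points: you actually carry out the direct computation for $\cL(D)$, and---more notably---you use that same computation together with the strictness of the Sandwich inclusion $\cL(D)\subsetneq\cL(D_2)$ to produce a vector $v\in\cL(D_2)$ with $da(v)\ne 0$, thereby proving that $dy'$ is independent of $dx,dy$ at $p$. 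This last step genuinely fills a gap the paper leaves open, and it nicely shows that the Goursat hypothesis (via the Sandwich Lemma) is exactly what forces the slope function to be a coordinate rather than, say, a constant.
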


Observe that if we reverse the order of $x$ and $y$,
the inverted situation becomes the ordinary situation with constant 0.
In the ordinary situation, the
following diagram shows the equations described in the lemma.
These are equations defining the sheaves inside $D_2$;
thus of course the box at top right is empty.

\begin{equation} \label{eqboxes}
\begin{array}{cccccc} 
& & \boxed{dy= (C+y')dx}  & \!\!\!\!\subset\!\!\!\! & \boxed{\phantom{XXX}} \\[10pt]
 & \ \ & \cup  \ \ \\
\boxed{\begin{array}{c} dx = 0 \\ dy= 0 \\ dy' = 0 \end{array}} & \!\!\!\!\subset\!\!\!\! & \boxed{\begin{array}{c} dx = 0 \\ dy= 0 \end{array}} &\ \
\end{array}
\end{equation}

\begin{proof}
As we have remarked, the distribution $\cL(D_{2})$
is involutive and thus comes from a foliation.
Since $\cL(D_{2})$ is of corank 2 in  $D_{2}$,
there must be 
local coordinates $x$ and $y$ for which
$\cL(D_{2})$ is defined inside $D_{2}$ by the vanishing
of $dx$ and $dy$.
Since $D$ is of corank one inside $D_{2}$,
at every point we have a single dependence between $dx$ and $dy$:
we must have $dy=(C+y')dx$ in some
neighborhood of $p$ for some coordinate function $y'$,
or $dx=x'dy$ for some coordinate function $x'$.
(The possibilities form a projective line, and the last possibility
accounts for the point at infinity.)

One finds that the differential ideal of $\mathcal L(D)$ is generated by $dx$, $dy$, and $dy'$ in the ordinary situation, and by $dx$, $dy$, and $dx'$ in the inverted situation.
This can be computed directly, but also appears in \cite[p. 463]{MR1841129}, and follows from the Retraction Theorem for exterior differential systems 
(\cite[Proposition 6.1.17]{MR2003610} or
\cite[Theorem 1.3]{MR1083148} or
\cite[Theorem 5.4]{MR891190}).
 \end{proof}

We return to the definition of the Goursat code word,
by probing further the case where 
$D$ is singular at $p$.
If $D_{i}(p)=\cL(D_{i+2})(p)$ for some $i\in\{1, 2, \dots, m-d-2\}$, then we associate the letter $V$ with the square of (\ref{littlesandwich}).
It will be convenient to introduce the notation
\begin{equation}\label{eq: vertical locus}
N(V)_i = \{p \in M : D_{i}(p)=\cL(D_{i+2})(p)\}.
\end{equation}

We claim that there are local coordinates so that the
sheaves of (\ref{littlesandwich})
are defined inside $D_{i+2}$
by the corresponding equations shown here:
\begin{equation*}
\begin{array}{cccc} 
\boxed{\begin{array}{c} dy= (C+y')dx \\ dx=x'dy' \end{array}} & \!\!\!\!\subset\!\!\!\! & \boxed{dy= (C+y')dx}  \\[10pt]
\ \ \cup & \ \ & \cup  \ \ \\
\boxed{\begin{array}{c} dx = 0 \\ dy= 0 \\ dy' = 0 \end{array}} & \!\!\!\!\subset\!\!\!\! & \boxed{\begin{array}{c} dx = 0 \\ dy= 0 \end{array}} &\ \
\end{array}
\end{equation*}
Indeed, since we are not concerned with the ordering of $x$ and $y$,
we can assume that in the square that would appear
just to the right we are in the ordinary situation.
Thus, except for the box at top left, our claim is immediate from 
Lemma~\ref{sandwichequations} applied to $D_{i+2}$.
If we now consider the equations of these sheaves
inside $D_{i+1}$, then the equations at the bottom left
reduce to $dx=dy'=0$. Thus $x$ and $y'$ are appropriate
Kumpera--Ruiz coordinates
for which we can apply part~\ref{p1} of Lemma~\ref{sandwichequations}.
Among the possibilities given, the only one for which
$p\in N(V)_i$
is the inverted situation: $dx=x'dy'$.
Furthermore we see that locally the locus $N(V)_i$ is given by $x'=0$, a nonsingular hypersurface.

We now consider the square of (\ref{littlesandwich})
together with the square to its left:
\begin{equation}\label{twosquares}
\begin{array}{cccccc} 
D_{i-1} & \!\!\!\!\subset\!\!\!\! & D_i & \!\!\!\!\subset\!\!\!\! & D_{i+1}  \\[10pt]
\ \ \cup & \ \ & \cup & \ \ & \cup \ \ \\[10pt]
\cL(D_{i}) & \!\!\!\!\subset\!\!\!\! & \cL(D_{i+1}) & \!\!\!\!\subset\!\!\!\! & \cL(D_{i+2}) &\ \
\end{array}
\end{equation}
Restricting our attention to $N(V)_i$, we consider the intersection 
of its tangent sheaf $\Theta_{N(V)_i}$ with $D_i$. This is of corank one inside $D_i$,
and is cut out by the additional equation $dx'=0$.
If the fibers at $p$ of $D_{i-1}$ and 
$\Theta_{N(V)_i} \cap D_i$ are the same, then we assign the letter $T$
to the square on the left.  
Again it will be convenient to introduce notation for the locus of points for  where we have assigned
$V$ to the right square and $T$ to the left square: we let
\begin{equation}\label{eq: VT locus}
N(VT)_i = \{p \in M : D_{i}(p)=\cL(D_{i+2})(p) \ \text{and} \ D_{i-1}(p)= 
(\Theta_{N(V)_i} \cap D_i)(p)\}.
\end{equation}

Assuming that we have assigned $T$ to 
the left square of diagram ~(\ref{twosquares}),
we claim that the equations defining the sheaves
of this diagram
are as follows:
\begin{equation*}
\begin{array}{cccccc} 
\boxed{\begin{array}{c} dy= (C+y')dx \\ dx=x'dy' \\ dx' = x''dy' \end{array}} & \!\!\!\!\subset\!\!\!\! & \boxed{\begin{array}{c} dy= (C+y')dx \\ dx=x'dy' \end{array}}& \!\!\!\!\subset\!\!\!\! & \boxed{dy= (C+y')dx}  \\[10pt]
\ \ \cup & \ \ & \cup & \ \ & \cup \ \ \\
\boxed{\begin{array}{c} dx = 0 \\ dy= 0 \\ dy' = 0 \\ dx' =0 \end{array}} & \!\!\!\!\subset\!\!\!\! & \boxed{\begin{array}{c} dx = 0 \\ dy= 0 \\ dy' = 0 \end{array}} & \!\!\!\!\subset\!\!\!\! & \boxed{\begin{array}{c} dx = 0 \\ dy= 0 \end{array}} &\ \
\end{array}
\end{equation*}
To obtain the equations in the bottom left box, 
we apply part~\ref{ldequations}
 of Lemma~\ref{sandwichequations}, using $D=D_{i+1}$.
 This tells us that $\cL(D_i)$ is defined inside $D_{i+1}$
 by $dx=dy'=dx'=0$; adjoining the equation $dy=(C+y')dx=0$
 of $D_{i+1}$ inside $D_{i+2}$ and simplifying,
 we obtain the four indicated equations. To obtain the equations
 in the top left box, we apply part~\ref{p1} of Lemma~\ref{sandwichequations} using $D=D_i$.
Since we have assigned $T$ to the left square,
we are in the ordinary situation with constant $C=0$;
thus $dx'=x''dy'$ should be in our system of equations,
together with the two equations cutting out $D_{i-1}$ inside $D_{i+1}$.
We remark that the letters $V$ and $T$ are mutually exclusive,
since the former requires the inverted situation and the latter the
ordinary situation with $C=0$.

Furthermore we see that the locus $N(VT)_i$ 
is defined
inside $N(V)_i$ by the additional equation $x''=0$.
Thus it is a nonsingular submanifold of $M$ of codimension two.
Restricting our attention to $N(VT)_i$, we consider the intersection 
$\Theta_{N(VT)_i} \cap D_{i-1}$, cut out inside
$D_{i-1}$ by the single equation $dx''=0$.
If the fibers at $p$ of $D_{i-2}$ and 
$\Theta_{N(VT)_i} \cap D_{i-1}$ are the same, 
then we assign the letter $T$
to the next square to the left.

At this point it becomes clear that all arguments can be repeated:
for $\tau>1$ we recursively define 
\begin{equation}\label{eq: VTtau locus}
N(VT^{\tau})_i = N(VT^{\tau-1})_i \cap \{p \in M : \ D_{i-\tau}(p) =(\Theta_{N(VT^{\tau-1})_i} \cap D_{i-(\tau-1)})(p)\}.
\end{equation}
The locus $N(VT^{\tau})_i$ will be a nonsingular
submanifold in $M$ of codimension $\tau+1$,
with explicit equations $x'=x''=\cdots=x^{(\tau+1)}=0$;
and we will assign yet another $T$ if
\[
D_{i-(\tau+1)}(p) = (\Theta_{N(VT^{\tau})_i} \cap D_{i-\tau})(p).
\]

Having explained when to associate $V$ or $T$ with a square
of diagram~(\ref{sandwich}), to finish the definition of the Goursat code word
we declare that in all other circumstances we associate the symbol $R$.
Thus the possibilities for Goursat code words are circumscribed as follows:
\begin{itemize}
\item
The first two symbols are $RR$.
\item
The symbol $T$ may only be used immediately following a $V$ or $T$.
\end{itemize}

The arguments used to prove Lemma~\ref{sandwichequations}
also give an Existential Sandwich Lemma, as follows.

\begin{lemma} \label{esl}
Suppose that $D$ is a nontrivial Goursat distribution
of rank greater than 2.
Any distribution $E$ sandwiched between $\cL(D)$ and $D$
(i.e., of corank one in $D$ and having  $\cL(D)$ as a corank one
subbundle) is Goursat, with Lie square $E_2=D$
(i.e., $E$ is a ``Lie square root'' of $D$).
The possibilities for $E$ form a projective line,
and the corresponding code word symbol can be any one
of the possible symbols, subject to the constraints just mentioned.
 \end{lemma}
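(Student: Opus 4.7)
The plan is to mimic the proof of Lemma~\ref{sandwichequations}, applied one level down in the Lie square sequence of $D$. Since $\rank D>2$, the Cauchy characteristic $\cL(D)$ is a nonzero distribution, and Lemma~\ref{sandwichequations} applied to $D$ supplies Kumpera--Ruiz coordinates in which the sheaves $\cL(D)\subset\cL(D_2)\subset D\subset D_2$ take the explicit form displayed in~(\ref{eqboxes}); I shall work in the ordinary situation for definiteness.

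Next I would parameterize the admissible $E$'s. Since $\cL(D)$ has corank $2$ in $D$, the quotient $D/\cL(D)$ is a rank-$2$ bundle, and any admissible $E$ corresponds fiberwise to a line in this quotient -- giving the pointwise projective line $\PP(D(p)/\cL(D)(p))$ of possibilities. Re-running the argument of part~(\ref{p1}) of Lemma~\ref{sandwichequations} inside the rank-$2$ quotient, in the local coordinates each $E$ is cut out inside $D$ by a single additional equation, either of the form $dy'=(C'+y'')\,dx$ (ordinary, an affine chart parameterized by the constant $C'$ and the new coordinate $y''$) or $dx=x'\,dy'$ (inverted, the point at infinity, for a new coordinate $x'$).

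To verify $E_2=D$, the inclusion $[E,E]\subset D$ is immediate from the skew-form argument in the proof of the Sandwich Lemma: the bilinear form $\omega\colon D\times D\to D_2/D$ has kernel $\cL(D)$, and $E/\cL(D)$, being a rank-$1$ subbundle of $D/\cL(D)$, is automatically isotropic. For the reverse inclusion, I would take the local generator $v=\partial/\partial x+(C+y')\partial/\partial y+\mu\,\partial/\partial y'$ (with $\mu=C'+y''$) of the transverse direction of $E$ and bracket it with sections $w$ of $\cL(D)$; since $[\cL(D),D]\subset D$ each such bracket lies in $D$, and its component along the missing direction $D/E$ is essentially $-w(\mu)\,\partial/\partial y'$, which is nonzero as soon as $y''$ is a genuine coordinate and $w$ is suitably chosen. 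Once $E_2=D$ is established, Goursatness of $E$ follows immediately by induction: $E_{i+1}=D_i$ for every $i\ge1$, so the ranks increase by exactly one at each step. Finally, the RVT code symbol for $E$ is read directly off the coordinate dichotomy above: the ordinary situation yields $R$, or $T$ when $C'=0$ together with additional prior-stratum tangencies hold, while the inverted situation yields $V$, so every symbol consistent with the constraints (first two $RR$, and $T$ only after $V$ or $T$) is realizable.

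The main obstacle is making the Lie-bracket computation rigorous: Lemma~\ref{sandwichequations} characterizes $\cL(D)$ only implicitly via the vanishing of $dx$, $dy$, $dy'$ on sections of $D_2$ and does not pin down an explicit frame, so one must verify the existence of $w\in\cL(D)$ with $w(\mu)\neq 0$. A cleaner conceptual route is to work with the skew form $E\times E\to D/E$ induced by $[E,E]\subset D$: its nondegeneracy is equivalent to $E_2=D$, and degeneracy would force $E$ to be involutive, pinning it to the single $\PP^1$-point $E=\cL(D_2)$ that must be excluded from the class of genuine Lie square roots.
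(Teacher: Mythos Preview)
Your approach of mimicking Lemma~\ref{sandwichequations} one level down is exactly what the paper intends; it offers no separate proof, only the remark that the same arguments apply. You are also right to notice that $E=\cL(D_2)$, being involutive, contradicts the lemma's literal universal statement. However, your ``cleaner conceptual route'' contains a genuine error: vanishing of the skew form $E\times E\to D/E$ does \emph{not} pin $E$ to the single point $\cL(D_2)$ of the $\PP^1$. Concretely, on $S(3)$ take $D=\Delta(3)_2$, spanned by $X=\partial_x+y'\partial_y+y''\partial_{y'}$, $\partial_{y''}$, and $\partial_{y'''}$; here $\cL(D)=\langle\partial_{y'''}\rangle$, $\cL(D_2)=\langle\partial_{y''},\partial_{y'''}\rangle$, and the sandwiched distribution $E=\langle X,\partial_{y'''}\rangle$ is involutive (since $[X,\partial_{y'''}]=0$) yet $E(p)\neq\cL(D_2)(p)$ everywhere. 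In fact, for \emph{every} direction in the $\PP^1$ one can build both Goursat and non-Goursat representatives: what decides whether $E_2=D$ is whether the defining function $\mu$ varies along $\cL(D)$, not the value $\mu(p)$ that selects the direction.

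The lemma, consistent with its name and its uses in Sections~\ref{PMC} and~\ref{universality}, should therefore be read existentially: for each direction in $\PP(D(p)/\cL(D)(p))$ there \emph{exists} a sandwiched Goursat germ $E$ realizing it, with code symbol determined by the direction alone. Your explicit bracket argument establishes exactly this, and the obstacle you identify dissolves once one realizes that the new function $y''$ (or $x'$) is being \emph{chosen}: since $\cL(D)(p)$ is nonzero, one simply picks $y''$ so that $dy''$ does not annihilate it, guaranteeing some $w\in\cL(D)$ with $w(\mu)\neq0$.
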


\section{Prolongation and extension} \label{PMC}

The work of Montgomery and Zhitomirskii
\cite{MR1841129,MR2599043}
shows that the study of Goursat distributions
naturally leads to the construction of a tower
of spaces.
Unbeknownst to them, the construction had previously been used
in algebraic geometry, but had never been connected to Goursat distributions.
(This earlier work includes three papers by the first two authors, including
\cite{MR1287696}.)

Once again we work with a smooth manifold $M$ carrying a distribution $D$
of rank $d$.
We now describe a \emph{general prolongation construction}
that creates a new manifold and new distribution.
Let $\widetilde{M}=\PP D$, the total space of the projectivization
of the bundle, and let $\pi : \widetilde{M} \to M$ be the projection.
A point $\widetilde{p}$ of $\widetilde{M}$ over $p \in M$
represents a line inside the fiber of $D$ at $p$,
and since $D$ is a subbundle of $TM$,
this is a \emph{tangent direction} to $M$ at $p$.
Let 
\[
d\pi : T{\widetilde{M}} \to \pi^*TM
\] 
denote the derivative map of $\pi$.
A tangent vector to $\widetilde{M}$ at $\widetilde{p}$
is said to be a \emph{focal vector}
if it is mapped by $d\pi$ to a tangent vector at $p$
in the direction represented by $\widetilde{p}$;
in particular a vector mapping to the zero vector
(called a \emph{vertical vector})
is considered to be a focal vector.
The subspace of focal vectors is called the \emph{focal space}.
The set of all focal vectors forms a subbundle
$\widetilde D$ of $T{\widetilde{M}}$,
called the \emph{prolongation} of $D$
or the \emph{focal bundle};
its rank is again $d$.
The vertical vectors form a subbundle, called
the \emph{vertical bundle} or the \emph{relative tangent bundle},
denoted $T({\widetilde{M}/M})$.
The terminology of focal vectors, focal bundles, focal spaces, etc.,
dates back to work of Semple \cite{MR0061406}.

In their Proposition 5.1,
Montgomery and Zhitomirskii \cite{MR1841129} establish the following
fundamental fact:
If $D$ is a Goursat distribution of rank 2, then
so is~$\widetilde D$.

To be more precise,
we need to clarify a construction in differential geometry
 which  seems to lack a standard terminology or notation.
Let
$\varphi : N \to M$
be a submersion
and let 
$\cF$ be a sheaf of vector fields on $M$.
The \emph{extension} of $\cF$ by $\varphi$,
denoted $\varphi^{\bullet}\cF$,
is the sheaf of all vector fields $v$ 
on open sets of $N$ satisfying the condition
\[
d\varphi(v(q)) \in \cF (\varphi(q))
\]
at each point $q$ of the open set. 
If $\cF$ is the sheaf of sections of a distribution $D$,
then $\varphi^{\bullet}\cF$ is the sheaf of sections of a distribution, 
which we denote by $\varphi^{\bullet}D$.
Note that our definition says $v \in \varphi^{\bullet} \cF$ if $ d\varphi \circ v$ is a section of the usual pullback bundle $\varphi^*\cF$.
Thus we are not describing the usual pullback of a bundle, nor is this the usual notion of an inverse image sheaf or a pullback sheaf in algebraic geometry, as described in  \cite{MR0463157}.
It is, however, consistent with the usage of 
\cite{MR1240644}
and  \cite{MR1841129}.
These authors use the potentially ambiguous notation $\varphi^*$;
hence our introduction of new notation and terminology.


Returning to the prolongation construction,
we observe that for a distribution $D$ on $M$, we have relations among
 vector bundles on $\widetilde{M}$ as indicated in the following diagram.
\begin{equation*}
\xymatrix @R=24pt {
T(\widetilde{M}/M)\: \ar@{=}[d] \ar@{^{(}->}[r] & \widetilde{D}_{} \ar@{_{(}->}[d] \ar@{>>}[r] & \cO_{D_{}}(-1) \ar@{_{(}->}[d]  \\
T(\widetilde{M}/M)\: \ar@{^{(}->}[r]  & \pi^{\bullet}D \ar@{>>}[r] & \pi^*D 
}
\end{equation*}
The four bundles on the left are distributions, i.e., subbundles of $T\widetilde{M}$.
The ranks of the bundles in the top row are (from left to right)
$d-1$, $d$, and 1; the ranks in the bottom row are 
$d-1$, $2d-1$, and $d$.  Here $ \cO_{D_{}}(-1) $ denotes the tautological line bundle.

Suppose that near $p \in M$
we have a set of equations cutting out $D$ inside $TM$.
This will be a set of linear equations in the differentials
of the local coordinates with coefficients in the local coordinates.
At a point $q \in \widetilde{M}$ over $p$ we can use a system of local coordinates including the local coordinates
pulled back from $M$;
then the very same set of equations cuts out the extension
$\pi^{\bullet}D$
 inside $T\widetilde{M}$.
To give equations for the prolongation $\widetilde{D}$, let us assume for simplicity that
$D$ has rank 2. Among our coordinate functions at $p$ we can find a pair
$x$ and $y$ such that their differentials $dx$ and $dy$ are independent linear functionals on the fiber of $D$ at $p$. Then $ \widetilde{D}$ is cut out
inside $\pi^{\bullet}D$ by one additional equation expressing the dependence
of $dx$ and $dy$; 
this will be either 
$dy=(C+y')dx$ (for a suitable local coordinate $y'$)
or $dx=x'dy$ (for a suitable local coordinate $x'$).

Returning to the situation 
of Montgomery and Zhitomirskii's Proposition 5.1,
what they assert is that if
\[ 
D \subset D_{2} \subset D_{3} \subset \cdots 
\]
is the Lie square sequence of a 
Goursat distribution $D$ of rank 2,
then
\[ 
\widetilde D \subset \pi^{\bullet}D \subset \pi^{\bullet}D_{2} \subset \pi^{\bullet}D_{3} \subset \cdots 
\]
is the Lie square sequence of $\widetilde{D}$, which is again Goursat of rank 2.
The assertion is clear from the general properties of extension, except
for the claim that $\widetilde D$ is Goursat.
To see this, one simply remarks that the 
relative tangent bundle $T({\widetilde{M}/M})$ is an involutive 
subbundle of $\pi^{\bullet}D$ of corank 2, and thus must be $\cL(D)$;
then Lemma~\ref{esl} shows that $\widetilde D$ is Goursat
with Lie square $\pi^{\bullet}D$. Note that
\begin{equation}\label{extendvsLiesquare}
\widetilde D_{2}=\pi^\bullet D
\end{equation}
and more generally that $\widetilde D_{i+1}=\pi^\bullet D_i $.

Since the general prolongation construction, when applied to a manifold $M$ carrying 
a distribution $D$, yields a new manifold
$\widetilde{M}$ carrying a new distribution $\widetilde{D}$,
we can iterate this construction to obtain
an infinite tower of manifolds and submersions
\begin{equation} \label{towermaps}
\cdots \to \widetilde{\widetilde{M}} \to \widetilde{M} \to M
\end{equation}
together with their associated focal bundles.

We consider a curve $C$ on $M$ which has a nontrivial smooth parameterization;
henceforth we will just say ``curve.''
Suppose that
$p$ is a nonsingular point on $C$.
If at $p$ the tangent vector of $C$ is contained in the fiber of $D$,
we say $C$ is \emph{tangent} to $D$ at $p$.
We say that $C$ is a \emph{focal curve} if it is tangent
to $D$ at each nonsingular point.
We can associate with $p$ the point
of $\widetilde{D}$ representing the tangent direction of $C$ at $p$.
Thus, away from singularities, we have a curve $\widetilde{C}$
in $\widetilde{M}$,
the \emph{lift} of $C$.
We also want to associate a point (or perhaps several points) of $\widetilde{M}$
with a singular point of $C$, and we do so by fiat:
we lift at all nonsingular points of $C$, and then take the closure.
Intuitively, we are associating to a singular point all the possible
limiting tangent directions.
We observe that the lift of a focal curve on $M$ gives us a focal curve on $\widetilde{M}$,
i.e., the lift of such a curve will be tangent to the focal bundle. Thus, if we like,
we can iterate the construction by further lifting.

\section{The monster tower} \label{MT}

If we begin by letting $D$ be the tangent bundle $TM$,
then the tower of (\ref{towermaps}) is called
the \emph{monster tower} (or \emph{Semple tower})
over the base manifold $M$.
In the tower
\begin{equation} \label{generalMT}
\cdots \to M(k) \xrightarrow{\pi_{k}} M(k-1) \xrightarrow{\pi_{k-1}} \cdots \to M(2) \xrightarrow{\pi_2} M(1) \xrightarrow{\pi_1} M
\end{equation}
we say that $M(k)$ is the \emph{monster space}
at \emph{level}~$k$. 
As above, we set $m = \dim(M) \geq 2$.
Each monster space $M(k)$ is the total space of a fiber bundle over $M(k-1)$, with fiber
a projective
space of dimension $m-1$.
The focal bundle on $M(k)$ is denoted $\Delta(k)$.
 
In general this has nothing to do with Goursat distributions.
If, however, we begin with a smooth surface $S$, then its tangent
bundle is a trivial Goursat distribution of rank 2; thus the
focal bundles are likewise rank 2 Goursat distributions.
In this case the fibers of the maps in (\ref{generalMT})
are projective lines. The Lie square sequence of the Goursat distribution
$\Delta(k)$ is
\[ 
\Delta(k) \subset \Delta(k)_{2} \subset \Delta(k)_{3} \subset \cdots \subset \Delta(k)_{k} \subset \Delta(k)_{k+1} = TS(k).
\]
Continuing to assume that the base manifold is a surface $S$,
we apply (\ref{extendvsLiesquare}) to $\pi_{k}$;
this tells us that
the Lie square of $\Delta(k)$ can be obtained by extension:
\begin{equation} \label{onestep}
 \Delta(k)_{2}=\pi^{\bullet}_{k}\, \Delta(k-1).
\end{equation}
More generally, the other bundles in its Lie square sequence are extensions from lower levels:
\begin{equation} \label{moresteps}
\Delta(k)_{j}=\pi^{\bullet}\Delta(k-j+1),
\end{equation}
where $\pi:S(k)\to S(k-j+1)$.

If we apply the lifting construction to the monster tower,
the lift of a focal curve on $S(k)$ --- a curve tangent to $\Delta(k)$ --- is a focal curve on $S(k+1)$,
and we can iterate the construction to
lift upward any desired number of levels in the tower.
If in particular we start with a curve $C$ on $S$
(automatically a focal curve)
we obtain curves $C(1)$ on $S(1)$, $C(2)$ on $S(2)$, etc.

As we have said, lifting associates with a singular point all the possible limiting tangent directions.
For example, if the curve $C$ has a node at $p$, then $C(1)$ will have two points
over $p$, and if $C$ has a cusp then it has a single point over $p$,
but this will be a nonsingular point on the threefold $C(1)$.
In the mathematical literature, $C(1)$ is also called the \emph{Nash modification}
of $C$.

\section{Universality of the monster tower} \label{universality}

As we previously noted, the monster spaces over a smooth surface $S$
naturally carry Goursat distributions of rank 2.
In \cite{MR1841129},
Montgomery and Zhitomirskii show that these spaces
are universal for Goursat distributions of rank 2.
More precisely, they show that each nontrivial Goursat germ
of rank 2
on a manifold of dimension $m=2+k$ (with $k\ge 2$) is equivalent
to the Goursat germ of $\Delta(k)$ at some point
of $S(k)$.
They prove this by a process which they call 
deprolongation,
which we explicate here, and explain how it leads to the universality.
(They also define a deprolongation process for Goursat germs
of higher rank, but we do not consider that process here.)

Consider
a nontrivial Goursat germ $D$ of rank 2 at the point $p\in M$.
Recall that the Cauchy characteristic $\cL(D_2)$ 
induces a characteristic foliation;
the leaves are curves.
Locally, one can contract these curves,
creating a smooth manifold $M/\cL(D_2)$,
the \emph{leaf space},
equipped with a submersion
$\lambda:M \to M/\cL(D_2)$.
Since each of the bundles in the Lie square sequence
of $D$ 
contains the bundle $\cL(D_2)$, we have a sequence
\begin{equation}\label{contractedseq}
\lambda_*(D/\cL(D_2)) \subset \lambda_*(D_{2}/\cL(D_2)) \subset \lambda_*(D_{3}/\cL(D_2)) \subset \cdots 
\end{equation}
of bundles on $M/\cL(D_2)$, with 
$\rank(\lambda_*(D_{j}/\cL(D_2))=\rank(D_j)-1=j$.
Each of these bundles extends to the corresponding
bundle in the Lie square sequence
of $D$:
\[
\lambda^{\bullet}\Bigl(\lambda_*(D_j/\cL(D_2))\Bigl) = D_j.
\]
We claim that if we omit the first bundle $\lambda_*(D/\cL(D_2))$
of  (\ref{contractedseq}) we obtain a Lie square sequence
\begin{equation}\label{shortcontractedseq}
\lambda_*(D_{2}/\cL(D_2)) \subset \lambda_*(D_{3}/\cL(D_2)) \subset \cdots.
\end{equation}
Indeed,
for $j\ge 2$ the Cauchy characteristic
$\cL(D_j)$ contains $\cL(D_2)$;
thus the Lie square map $D_j \times D_j \to D_{j+1}$
yields a surjection from
$D_j/\cL(D_2) \times D_j/\cL(D_2)$
to
$D_{j+1}/\cL(D_2)$.

Thus $\lambda_*(D_{2}/\cL(D_2))$
is a Goursat distribution of rank 2 on the leaf space.
We call it the \emph{deprolongation} of $D$.

Part~\ref{ld2equations} of Lemma~\ref{sandwichequations}
tells us that locally $\cL(D_2)$
is defined inside $D_2$ by the vanishing of $dx$ and $dy$,
where $x$ and $y$ are part of a system of local coordinates.
Thus these two functions are constant on the leaves,
and descend to functions on the leaf space.
Since the total space of $\cL(D_2)$ is of codimension 2
inside that of $D_2$, the differentials $dx$ and $dy$
remain independent when restricted to 
$\lambda_*(D_{2}/\cL(D_2))$.
Part~\ref{p1} of the same lemma
then tells us that locally each point of $p \in M$
can be interpreted as a tangent direction
to the point $\lambda(p)$
lying inside $\lambda_*(D_{2}/\cL(D_2))$,
and that $D$ is the focal bundle.
Thus if we apply the prolongation construction
to $\lambda_*(D_{2}/\cL(D_2))$,
we obtain
a projective line bundle over the leaf space
that contains the neighborhood on $M$ with which we started,
and the prolongation of 
$\lambda_*(D_{2}/\cL(D_2))$
is $D$.
In this sense, the operations of prolongation
and deprolongation are inverse processes.

Deprolongation can be applied repeatedly. If we begin
with a nontrivial Goursat germ $D$ of rank 2 and corank $k\ge 2$,
then we can deprolong it $k-1$ times, arriving at
a (trivial) Goursat germ of rank 2 and corank 1,
i.e., a distribution germ whose Lie square is the tangent bundle
of the base threefold. It is well known
that there is just one such distribution
(up to equivalence of germs of distributions),
the \emph{contact distribution}:
in appropriate local coordinates $x$, $y$, $y'$,
it is the distribution defined by $dy=y'dx$.
Since prolongation and deprolongation are inverse
processes, we conclude that
$D$ is found somewhere
in the prolongation tower over this distribution,
i.e., it is equivalent to the germ of the prolongation of
the contact distribution at some point
in the $(k-1)$st space in this tower.
As the contact distribution is equivalent to $\Delta(1)$ at any point of $S(1)$, this shows that $D$ appears somewhere in the monster tower.

One could actually stop this process one step earlier,
arriving at a Goursat germ whose rank and corank are both 2.
Again one knows that such a germ is unique:
it is the \emph{Engel distribution},
given in local coordinates $x$, $y$, $y'$, $y''$
by the vanishing of $dy-y'dx$ and $dy'-y''dx$.
See Sections 6.2.2 and 6.11 of \cite{MR1867362}.  The Engel distribution is equivalent to $\Delta(2)$ at any point of $S(2)$.

Alternatively, one can continue it by one additional step,
but this step is different, since it is not dictated
by the Cauchy characteristic. Given the germ
of the contact distribution, one can deprolong
in any direction tangent to this distribution, creating a surface
whose tangent bundle will prolong to the contact distribution.
We illustrate this by two examples.
\begin{example} \label{finaldeprolong1}
The coordinate names $x$, $y$, $y'$ naturally suggest
that one should contract the curves on which $x$ and $y$ are constant;
on the resulting surface with coordinates $x$ and $y$ one
interprets $y'$ as $dy/dx$.
\end{example}
\begin{example} \label{finaldeprolong2}
Alternatively, let $X=y'$, let $Y=y-y'x$, and let $Y'=-x$.
(To motivate this, think of $X$ and $Y$ as
``slope'' and ``intercept.'')
Since $dY = dy - y'dx - xdy'$ and $dy - y'dx$ vanishes on the contact
distribution, we see that in our new coordinates the contact distribution is defined
by the equation $dY=Y'dX$. Thus one can equally well
obtain a surface by contracting the curves on which $X$ and $Y$
are constant, and its tangent bundle will likewise prolong to the 
contact distribution.
\end{example}

Observe that, given a Goursat germ $D$ of rank 
2 and corank $k$, the process we have described
for finding a point $p_k$
on the monster space $S(k)$ over a surface $S$
is a recursive process. By repeated deprolongation
we first construct the surface $S$ and a point $p_0$;
then by the prolongation construction we build
$S(1)$ and a point $p_1$ lying over $p_0$, then $S(2)$ and $p_2$,
etc.

What about Goursat distributions of rank greater than two?
Given a nontrivial Goursat germ $D$ of rank $d\ge 3$,
Lemma~\ref{esl} can be applied $d-2$ times to obtain
the beginning of a Lie square sequence
\[ 
E \subset E_{2} \subset  \cdots \subset E_{d-1} = D,
\]
where $E$ is a rank 2 Goursat germ.
As we have just observed,
$E$ can be obtained by repeated prolongation
of the (threefold) contact distribution.
Letting $j$ and $k$ denote the coranks of $D$ and $E$ respectively, 
we observe that $k=j+d-2$;
thus $j+d-3$ steps of prolongation are required.
We conclude that any Goursat germ can be found within the monster tower over a surface.

\section{Focal curve germs} \label{fcg}

Throughout the remainder of the paper, we work in the monster tower
over a smooth surface $S$.
We will be studying local invariants, and thus
we are interested in a \emph{focal curve germ}
consisting of a point $p$ on some monster space $S(k)$,
together with a locally irreducible focal curve passing through $p$;
if necessary we will replace $S(k)$ by a  neighborhood of $p$.
We say that the germ is \emph{located at} $p$
or that $p$ is its \emph{location}.
Note in particular that every irreducible curve germ on $S$ is automatically a focal curve germ.

A focal curve germ on $S(k)$ may project to a single point of $S$.
Such a germ is said to be \emph{critical}.
We are using the terminology of Montgomery and Zhitomirskii \cite[Definition 2.16]{MR2599043}, but our definition differs slightly from theirs,
since they do not apply the terminology to a curve in $S(1)$.
This reflects a difference in viewpoint: they are studying
Goursat distributions, whereas we are equally focused
on curves as objects of intrinsic interest.
One can easily characterize a critical germ:
it is either the germ of a fiber of $S(k)$ over $S(k-1)$,
or the lift from some lower level of such a germ.
A tangent direction to a critical germ is also called \emph{critical}.

If the focal curve germ $C(k)$ located at $p \in S(k)$ is not critical, then
one may project it to the base surface $S$, obtaining a curve germ $C$
there, and then recover $C(k)$ by repeated lifting of $C$. 
(Note that
this justifies the notation $C(k)$.)

We say that $C(k)$ is \emph{regular} if it is nonsingular and if
the tangent direction at $p$ is not critical.
Starting with an arbitrary focal curve germ, we can lift it through the tower.
If, after a finite number of steps, we obtain a regular focal curve germ,
then we say the original germ is \emph{regularizable}.
Its \emph{regularization level} $r$ is the smallest value for which $C(r)$ is regular.
Note that $r \ge k$, with equality if and only if $C(k)$ is regular.
If $r>k$, then $C(k+1)$ through $C(r)$ all have regularization level $r$.

A critical germ provides
the basic example of a focal curve germ which is not regularizable.
There are, however, other nonregularizable germs: one can create a curve
germ that agrees with a critical germ to infinite order, i.e., that
cannot be separated from the critical germ by any number of liftings.
In \cite[Theorem 2.36]{MR2599043},
the authors observe that one
way to avoid such curves is to assume that $S$ has an analytic
structure; they show that a noncritical analytic curve is regularizable.
We will take a slightly different approach, hypothesizing ``regularizable''
as needed in our later results.
We will give examples of the definitions in the next section,
which will introduce natural coordinate charts on the monster spaces.

Suppose that
$C(k)$ and $C'(k)$ are regularizable focal curve germs
located, respectively, at points $p$ and $p'$ of $S(k)$.
A local diffeomorphism from $S(k)$ to itself taking
$p$ to $p'$ is said to be an \emph{equivalence
of (regularizable) focal curve germs}
if it takes $C(k)$ to $C'(k)$
and its derivative preserves the focal distribution.
An \emph{invariant of focal curve germs} is a function
on the set of equivalence classes.
Note that an equivalence of focal curve germs
gives us (simply by forgetting the curves)
an equivalence of the Goursat germs at $p$ and $p'$ 
of the focal distribution;
thus every focal curve germ invariant is a Goursat invariant.

We now make some further remarks about 
Goursat and focal curve germ invariants,
without giving full definitions of our terminology;
see the beginning pages of \cite{MR2599043}
for a complete discussion.
By a theorem of B\"acklund \cite{MR1509862},
every equivalence of rank 2 Goursat germs is the prolongation of an equivalence of Goursat germs of rank 2 and corank 1, 
which are contact distributions.  
Thus the pseudogroup of equivalences of Goursat germs
at a point is contained in the pseudogroup of local contactomorphisms. In our Figure~\ref{invdiagram}, the top three boxes are invariants for the action of the smaller pseudogroup, while the bottom three boxes are invariants for the action of the larger pseudogroup.

\section{Coordinates on monster spaces} \label{monstercoordinates}

We now explain how to introduce coordinates on the monster spaces
over a surface $S$, and relate them to the coordinates
previously employed in Lemma~\ref{sandwichequations}.
Let $x$ and $y$ be coordinates on a neighborhood $U$ in $S$.
On $U(k)$ there are $2^k$ charts, each of which is 
a copy of $U \times \AAA^k$,
the product of the base neighborhood $U$ and $k$-dimensional affine space,
and on each chart there are $k+2$ coordinate functions:
the pullback of $x$ and $y$ from $U$, together with
$k$ affine coordinates.
At each level $j$,
by a recursive procedure, 
two of these coordinates are
designated as \emph{active coordinates}.
One is the \emph{new coordinate}
$\nn_j$,
and the other is the \emph{retained coordinate}
$\rr_j$.
In addition, for $j>0$,
a third coordinate is designated as the \emph{deactivated coordinate}
$\dd_j$.

To describe the recursive procedure,
we begin with a chart on $U(j)$ with coordinates
 $\nn_{j}$, $\rr_{j}$, and $\dd_{j}$
together with $j-1$ unnamed coordinates.
At each point of the chart,
the fiber of $\Delta(j)$
(except for the zero vector)
consists of tangent vectors for which
either the restriction of the differential $d\nn_{j}$ or 
that of
$d\rr_{j}$ is nonzero.
Create a chart at the next level by choosing one of the following two options:
\begin{itemize}
\item
Assuming the restriction of $d\rr_{j}$ is nonzero, let
$\nn_{j+1}=d\nn_{j}/d\rr_{j}$; then set
$\rr_{j+1}=\rr_{j}$ and $\dd_{j+1}=\nn_{j}$.
We call this the \emph{ordinary choice}.
\item
Assuming the restriction of $d\nn_{j}$ is nonzero, let
$\nn_{j+1}=d\rr_{j}/d\nn_{j}$; then set
$\rr_{j+1}=\nn_{j}$ and $\dd_{j+1}=\rr_{j}$.
We call this the \emph{inverted choice}.
\end{itemize}
To begin the process we always make an ordinary choice,
but there are two possibilities.
On $U$ the active coordinates are $x$ and $y$,
either of which may be designated as the retained coordinate $\rr_0$;
the other coordinate is $\nn_0$,
and there is no deactivated coordinate.
In every chart the names of the coordinates are $\rr_0$, $\nn_0$, $\nn_1$,
\dots, $\nn_j$, but their meaning depends on the chart.

The charts are given names such as $\cC(oiiooi)$, where each symbol $o$
or $i$ records which choice has been made, either ordinary or inverted.

Alternatively, we name all coordinates using superscripted $x$'s and $y$'s,
as follows. We begin with $x^{(0)}=x$ and $y^{(0)}=y$.
At each level, the two active coordinates will be $x^{(i)}$ and
$y^{(j)}$, for some nonnegative integers $i$ and $j$.
 If we create our chart at the next level by
designating $x^{(i)}$ as the retained coordinate,
then the new active coordinate is $y^{(j+1)}=dy^{(j)}/dx^{(i)}$;
if we designate $y^{(j)}$ as the retained coordinate,
then the new active coordinate is $x^{(i+1)}=dx^{(i)}/dy^{(j)}$.
This notation is meant to suggest the standard usage
in calculus, where the superscript indicates a number of prime marks:
$y^{(1)}=y'$, $y^{(2)}=y''$, etc.
Note, however, that the meaning depends on the choice
of chart, e.g.,
\begin{itemize}
\item
$y'$ means $dy/dx$ if we begin
by retaining $x$,
\item
$y'$ means $dy/dx'$ if we first
retain $y$ and then retain $x'$;
this coordinate is first used at level 2;
\item
$y'$ means $dy/dx''$ if we retain $y$ twice, then retain $x''$, etc.
\end{itemize}

Calculating lifts of parameterized curves is straightforward from the definitions of the coordinates, as we now illustrate.

\begin{example} \label{liftexample}
Beginning with the germ $C$ parameterized by
$x=t^5$ and $y=t^7$, we can calculate the fifth lift $C(5)$ as follows:
\begin{align*}
y' &= dy/dx = \tfrac{7}{5}t^2 \\
x' &= dx/dy' = \tfrac{25}{14}t^3 \\
x'' &= dx'/dy' = \tfrac{375}{196}t \\
y'' &= dy'/dx'' = \tfrac{2744}{1875}t \\
y^{(3)} &= dy''/dx'' = \tfrac{537824}{703125}\,.
\end{align*}
Note that we have chosen the chart $\cC(oioio)$ 
on $S(5)$ in which the germ actually appears; it is located at 
$p=\left(0,0;0,0,0,0,\tfrac{537824}{703125}\right)$.
We call this the associated \emph{curvilinear data point};
the values following the semicolon record the
data of orders 1 through~5. 
The third lift $C(3)$ is nonsingular but not regular;
the regularization level of the beginning germ is 4.

Going in the opposite direction, one can begin with the parametric equations for the two active coordinates
$x''$ and $y^{(3)}$ and integrate to obtain the other equations, supplying the required constant values
(which in this example happen to be zeros).
\end{example}

As the example illustrates, given a point $p \in S(k)$
one can easily find focal curves passing through it:
simply write parametric expressions for the two active
coordinates and then do $k$ appropriate integrations.
This will always be successful, unless one has
chosen a constant parametric expression.
In fact, we can choose these two expressions so that
the resulting focal curve germ $C(k)$ is not critical; 
thus it projects
to a curve germ on $S$ (rather than a single point).
Even more, we can choose the expressions so that
the resulting curve germ $C(k)$ is regular.

\begin{example}
The curve $C(3)$ of Example~\ref{liftexample}
--- parameterized by $x=t^5$, $y=t^7$
and the first three equations of the display ---
is not regular. To find a regular focal curve
passing through $(0,0;0,0,0) \in S(3)$,
one can instead begin with $y'=t$ and $x''=t$;
integration yields the curve on $S$ parameterized
by $x=\tfrac{1}{6}t^3$, $y=\tfrac{1}{8}t^4$.
\end{example}

To obtain, in a selected chart on $S(k)$, 
a set of equations defining the focal bundle $\Delta(k)$
within the tangent bundle $TS(k)$, 
we observe that (\ref{onestep}) tells us that we can repeat the equations defining
$\Delta(k-1)$ within $TS(k-1)$, and we just need to adjoin
a single equation. If, at the last step, one has made the ordinary choice,
then the additional equation is
$d\nn_{k-1}=\nn_{k} \, d\rr_{k-1}$; 
if one has made the inverted choice,
then the additional equation is
$d\rr_{k-1}=\nn_{k} \, d\nn_{k-1}$.
These can be written uniformly as
\begin{equation}\label{adjoined}
d\dd_k=\nn_k \, d\rr_k.
\end{equation}

\begin{example} \label{liftexamplecont}
For the chart $\cC(oioii)$ of Example~\ref{liftexample},
the focal bundle is defined within $TS(5)$ by
\begin{align*}
dy &= y' \, dx \\
dx &=x' \, dy' \\
dx' &=x'' \, dy' \\
dy' &=y'' \, dx'' \\
dy'' &=y^{(3)} \, dx'' \,.
\end{align*}
\end{example}

The coordinates used in the charts of the monster spaces can readily be used to provide the 
local coordinates described in Lemma~\ref{sandwichequations}
(which we subsequently used to explain the code words of Goursat distributions),
as we now explain.

\begin{lemma} \label{comparecoordinates}
Working at a point $p$ in a chart on the monster space $S(k)$ with $k\ge 2$,
and assuming $1 \le j \le k-1$,
consider the bundles shown here:
\[
\begin{array}{ccccc} 
&& \Delta(k)_j & \!\!\!\!\subset\!\!\!\! & \phantom{D}\Delta(k)_{j+1}  \\[10pt]
&&  \cup & \ \ &   \ \ \\[10pt]
\cL(\Delta(k)_j) & \subset & \cL(\Delta(k)_{j+1}) &  &\ \
\end{array}
\]
The equations of these bundles inside $\Delta(k)_{j+1}$
are as follows:
\begin{equation} \label{eqboxesmonster}
\begin{array}{cccccc} 
& & \boxed{d\dd_{k-j+1}= \nn_{k-j+1} d\rr_{k-j+1}}  & \!\!\!\!\subset\!\!\!\! & \boxed{\phantom{XXX}} \\[10pt]
 & \ \ & \cup  \ \ \\
\boxed{\begin{array}{c} d\rr_{k-j+1} = 0 \\ d\dd_{k-j+1}= 0 \\ d\nn_{k-j+1} = 0 \end{array}} &  \subset \!\!\!\!\!\!\!\! & \boxed{\begin{array}{c} \phantom{.} d\rr_{k-j+1} = 0 \phantom{.} \\ \phantom{.} d\dd_{k-j+1}= 0 \phantom{.} \end{array}} &\ \
\end{array}
\end{equation}

Thus we may obtain appropriate Kumpera--Ruiz coordinates for Lemma~\ref{sandwichequations} by letting
$x=\rr_{k-j+1}-\rr_{k-j+1}(p)$, $y=\dd_{k-j+1}-\dd_{k-j+1}(p)$, and $y'=\nn_{k-j+1}-\nn_{k-j+1}(p)$,
and letting the constant of part~\ref{p1} of that lemma be $C=\nn_{k-j+1}(p)$.
\end{lemma}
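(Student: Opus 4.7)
The plan is to exploit the identifications $\Delta(k)_{j+1} = \pi^{\bullet} \Delta(k-j)$ and $\Delta(k)_{j} = \pi^{\bullet} \Delta(k-j+1)$ supplied by~(\ref{moresteps}), which let us compute everything by pullback from lower levels. First, since extension pulls back defining equations and since~(\ref{adjoined}) says each $\Delta(i)$ is cut out inside $TS(i)$ by adjoining $d\dd_i = \nn_i d\rr_i$ to the equations for $\Delta(i-1)$, iterating yields that $\Delta(k)_{j+1}$ is cut out in $TS(k)$ by the forms $d\dd_i - \nn_i d\rr_i$ for $i = 1, \dots, k-j$, while $\Delta(k)_j$ requires also the form at level $k-j+1$. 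The top-left box of~(\ref{eqboxesmonster}) is then immediate.

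The main step is the identification of the Cauchy characteristics: I claim $\cL(\Delta(k)_{j+1}) = T(S(k)/S(k-j))$ and $\cL(\Delta(k)_j) = T(S(k)/S(k-j+1))$, mirroring the argument recalled in Section~\ref{PMC}. The relative tangent bundle is integrable and contained in $\pi^{\bullet} \Delta(k-j) = \Delta(k)_{j+1}$. A short computation in local fiber coordinates shows that if $v$ is $\pi$-vertical and $w \in \pi^{\bullet}\Delta(k-j)$, then $d\pi([v,w])$ at a point $q$ is obtained by differentiating the $\Delta(k-j)$-valued function $d\pi(w)$ along the fiber direction $v$, and so remains in the linear subspace $\Delta(k-j)(\pi(q))$; thus $T(S(k)/S(k-j)) \subset \cL(\Delta(k)_{j+1})$. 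Since $\Delta(k)_j$ is itself Goursat (its Lie square is $\Delta(k)_{j+1}$ by construction of the sequence), the Sandwich Lemma forces $\cL(\Delta(k)_{j+1})$ to have corank $2$ in $\Delta(k)_{j+1}$, matching the corank of $T(S(k)/S(k-j))$; hence equality. The statement for $\cL(\Delta(k)_j)$ is analogous.

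Next I would translate these intrinsic descriptions into the active-coordinate equations of~(\ref{eqboxesmonster}). The fibers of $\pi : S(k) \to S(k-j)$ are the common level sets of the pulled-back coordinates $\rr_0, \nn_0, \nn_1, \dots, \nn_{k-j}$. The promotion rule $\{\rr_{i+1}, \dd_{i+1}\} = \{\rr_i, \nn_i\}$ says that $d\rr_{k-j+1}$ and $d\dd_{k-j+1}$ span the same pair of $1$-forms as $d\rr_{k-j}$ and $d\nn_{k-j}$. Combining this with the defining equations $d\dd_i = \nn_i d\rr_i$ of $\Delta(k)_{j+1}$, a descending induction on $i$ from $k-j$ down to $0$ shows that, modulo those equations, the pair of conditions $d\rr_{k-j+1} = d\dd_{k-j+1} = 0$ forces the vanishing of every $d\rr_i$, $d\dd_i$, and $d\nn_i$ at levels $i \le k-j$; the converse is immediate since $\rr_{k-j+1}$ and $\dd_{k-j+1}$ are themselves coordinates at levels $\le k-j$. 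This verifies the top-middle box, and the bottom-left box follows by the same argument supplemented with the additional condition $d\nn_{k-j+1} = 0$ corresponding to the extra pulled-back coordinate.

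Finally, setting $x = \rr_{k-j+1} - \rr_{k-j+1}(p)$, $y = \dd_{k-j+1} - \dd_{k-j+1}(p)$, and $y' = \nn_{k-j+1} - \nn_{k-j+1}(p)$, the equation $d\dd_{k-j+1} = \nn_{k-j+1} d\rr_{k-j+1}$ takes the ordinary form $dy = (C + y')dx$ with $C = \nn_{k-j+1}(p)$, so the boxes of~(\ref{eqboxesmonster}) become those of~(\ref{eqboxes}) verbatim. The hardest part of the plan will be the descending induction that bridges the intrinsic description of $T(S(k)/S(k-i))$ and the active-coordinate equations: at each downward step one must alternate between invoking the promotion rule for $(\rr_i, \dd_i, \nn_i)$ and the level-$i$ defining equation, while keeping careful track of which coordinate is retained, deactivated, or new in the given chart.
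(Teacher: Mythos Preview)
Your argument is correct, but it follows a genuinely different route from the paper's. The paper first invokes~(\ref{moresteps}) to reduce to the case $j=1$, and then proves the $j=1$ statement~(\ref{topeqboxesmonster}) by induction on $k$: the inductive step writes down the five-box diagram inside $\Delta(k)_3$ using the hypothesis at level $k-1$, adjoins the defining equation~(\ref{adjoined}) for $\Delta(k)$, and then simplifies according to whether the last choice was ordinary or inverted; the bottom-left box is obtained by appealing to part~\ref{ldequations} of Lemma~\ref{sandwichequations}. By contrast, you give a direct intrinsic identification $\cL(\Delta(k)_{j+1}) = T(S(k)/S(k-j))$ (valid for all $j$ at once), established by the bracket computation for vertical fields plus the corank count from the Sandwich Lemma, and only afterwards translate into the active coordinates via the promotion rule and a descending induction. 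Your route makes the geometric content --- that the Cauchy characteristic is exactly the relative tangent bundle of the tower map --- completely transparent, at the cost of the somewhat fiddly coordinate bookkeeping you flag at the end; the paper's route stays entirely within the coordinate calculus and avoids having to argue the containment $T(S(k)/S(k-j)) \subset \cL(\pi^\bullet \Delta(k-j))$ separately, but obscures the underlying geometry.

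One small point: your box labels are off. What you call the ``top-left'' box is the box carrying the single equation for $\Delta(k)_j$ (there is no box to its left in~(\ref{eqboxesmonster})), and what you call the ``top-middle'' box after the descending induction is in fact the bottom-middle box, i.e., the equations for $\cL(\Delta(k)_{j+1})$. The mathematics you attach to each label is correct; only the labels themselves need fixing.
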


\begin{proof}
Equation~(\ref{moresteps})
tells us that, for $2 \le j \le k-1$,
the sheaves $\Delta(k)_j$ are extensions from lower levels.
Thus it suffices to verify that the equations of \ref{eqboxesmonster}
are correct when $j=1$, i.e., that the equations are as follows:
\begin{equation} \label{topeqboxesmonster}
\begin{array}{cccccc} 
& & \boxed{d\dd_{k}= \nn_{k} d\rr_{k}}  & \!\!\!\!\subset\!\!\!\! & \boxed{\phantom{XXX}} \\[10pt]
 & \ \ & \cup  \ \ \\
\boxed{\begin{array}{c} d\rr_{k} = 0 \\ d\dd_{k}= 0 \\ d\nn_{k} = 0 \end{array}} & \!\!\!\!\subset\!\!\!\! & \boxed{\begin{array}{c} d\rr_{k} = 0 \\ d\dd_{k}= 0 \end{array}} &\ \
\end{array}
\end{equation}
This is done by induction on $k$.

For $k=2$
we are considering the bundle $\Delta(2)$.
If the ordinary choice has been made,
then the identifications of variables are as follows:
\begin{align*}
\rr_2 &= x \\
\dd_2 &= y' \\
\nn_2 &= y''.
\end{align*}
If the inverted choice has been made,
then the identifications are
\begin{align*}
\rr_2 &= y' \\
\dd_2 &= x \\
\nn_2 &= x'.
\end{align*}

For the inductive step, 
we claim that we have the following diagram
of equations inside $\Delta(k)_{3}$ for the bundles 
$\Delta(k)$, $\Delta(k)_{2}$, and $\Delta(k)_{3}$
(in the top row)
and the Cauchy characteristics $\cL(\Delta(k)_{2})$ and $\cL(\Delta(k)_{3})$
(in the bottom row).
\begin{equation*}
\begin{array}{cccccc} 
\boxed{\begin{array}{c} d\dd_{k-1}= \nn_{k-1} d\rr_{k-1} \\ d\dd_{k}= \nn_{k} d\rr_{k} \end{array}} & \subset & \boxed{d\dd_{k-1}= \nn_{k-1} d\rr_{k-1}}  & \!\!\!\!\subset\!\!\!\! & \boxed{\phantom{XXX}} \\[10pt]
\cup & \ \ & \cup  \ \ \\
\boxed{\begin{array}{c} d\rr_{k-1} = 0 \\ d\dd_{k-1}= 0 \\ d\nn_{k-1} = 0 \end{array}} & \!\!\!\!\subset\!\!\!\! & \boxed{\begin{array}{c} d\rr_{k-1} = 0 \\ d\dd_{k-1}= 0 \end{array}} &\ \
\end{array}
\end{equation*}
Except for the box at top left, this
follows from the inductive hypothesis;
in that box we have adjoined
the additional equation (\ref{adjoined}) defining $\Delta(k)$ inside $\Delta(k)_2$.
Now erase the rightmost boxes of each row.
For the remaining three boxes,
we obtain equations inside $\Delta(k)_{2}$
by omitting the equation 
$d\dd_{k-1}= \nn_{k-1} d\rr_{k-1}$
from the boxes of the top row,
and using this equation to simplify the set of equations in the bottom left box,
i.e., by omitting $d\dd_{k-1}=0$.
There are now two cases.
If the ordinary choice has been made, then
we replace $\rr_{k-1}$ by $\rr_k$ and $\nn_{k-1}$ by $\dd_k$;
if the inverted choice has been made, then
we replace $\rr_{k-1}$ by $\dd_k$ and $\nn_{k-1}$ by $\rr_k$.
In either case, we obtain the right box of the bottom row of
(\ref{topeqboxesmonster}).
The equations in the left box of this row are a consequence of
part~\ref{ldequations} of Lemma~\ref{sandwichequations}.

The last statement of the lemma now follows from comparing 
diagrams (\ref{eqboxes}) and (\ref{eqboxesmonster}).

\end{proof}

\begin{example} \label{mscgc}
Using the chart $\cC(oioio)$ on $S(5)$
of Examples~\ref{liftexample} and
\ref{liftexamplecont}, here is diagram \ref{topeqboxesmonster}:
\begin{equation*} 
\begin{array}{cccccc} 
& & \boxed{dy''= y^{(3)} dx''}  & \!\!\!\!\subset\!\!\!\! & \boxed{\phantom{XXX}} \\[10pt]
 & \ \ & \cup  \ \ \\
\boxed{\begin{array}{c} dx'' = 0 \\ dy''= 0 \\ dy^{(3)} = 0 \end{array}} & \!\!\!\!\subset\!\!\!\! & \boxed{\begin{array}{c} dx'' = 0 \\ dy''= 0 \end{array}} &\ \
\end{array}
\end{equation*}
The bundle $\cL(\Delta(5))$ at lower left is the trivial (rank 0) bundle.
For the point $p$
located at
$$
\left(x(p),y(p);y'(p),x'(p),x''(p),y''(p),y^{(3)}(p)\right),
$$
we obtain local coordinates by subtracting each of these constants
from the corresponding coordinate: $X:=x-x(p)$, $Y:=y-y(p)$,
$Y':=y'-y'(p)$, etc.
Using these coordinates, $\cL(\Delta(5)_2)$
is  defined  inside $\Delta(5)_2$
by 
\[
dX''=dY''=0,
\]
while the focal bundle $\Delta(5)$ is defined inside $\Delta(5)_2$ 
by the single equation
\[
dY'' = \left(y^{(3)}(p) + Y^{(3)}\right)dX''.  
\]
Using this analysis to infer the last symbol in the code word associated with the focal bundle $\Delta$ at point $p$ (following the recipe of Section~\ref{RVTGoursat}), we note that it depends on the location of $p$.
If $y^{(3)} = 0$, the last symbol is $T$, and otherwise it is $R$.
\end{example}

\section{Divisors at infinity and their prolongations} \label{strata}

As explained in Section~\ref{MT},
one constructs the monster tower over a surface
by prolongation of successive focal bundles:
the monster
space
$S(j)$ is obtained by applying the general prolongation
construction
of Section~\ref{PMC}
to the bundle $\Delta(j-1)$ on  $S(j-1)$. For $j\ge 2$, the rank 2 bundle $\Delta(j-1)$ 
contains the line bundle $T(S(j-1)/S(j-2))$ of vertical vectors.
If one applies the general prolongation construction 
to 
this line bundle, one obtains a tower of spaces
\begin{equation*} 
\cdots \to I_j[3] \to I_j[2] \to I_j[1]  \to I_j  \to S(j-1)
\end{equation*}
over $S(j-1)$;
since we started with a line bundle, each of these
spaces is simply a copy of $S(j-1)$.
Each of these spaces naturally fits inside
its counterpart in the monster tower.
Thus $I_j$ is a divisor on $S(j)$, while $I_{j}[1]$
is a submanifold of codimension 2 on $S(j+1)$, etc.
We call $I_j$ the $j$th \emph{divisor at infinity}
and $I_j[\ell]$ its $\ell$th \emph{prolongation}.
(For a generalization, see \cite{MR3720327}.)

What is the geometric meaning of these loci?
Using either our interpretation of the coordinate systems,
or reflecting upon how the lift of a curve could possibly
have a vertical tangent, we realize that the divisor at infinity $I_j$
consists of those curvilinear data points for which we consider the 
$j\text{th}$-order data to be infinite.

\begin{example}
For the \emph{ramphoid cusp}
of Figure~\ref{ramphoid},
its lift to $S(2)$ is given by
\begin{align*}
x&=t^2 \\
y&=t^4+t^5 \\
y' &=2t^2+\tfrac{5}{2}t^3 \\
y'' &=2+\tfrac{15}{4}t .
\end{align*}
Observe that this lift is
tangent to the fiber of $S(2)$ over $S(1)$.
We can compute
$$
y'''\ =\frac{dy''}{dx}=\frac{15}{8t}
$$
but of course we can't evaluate this at $t=0$;
that's because the third lift intersects $I_3$.
To see the intersection we instead should compute
$$x' =\frac{dx}{dy''}=\frac{8}{15}\,t\,. $$
In this chart the divisor at infinity $I_3$
is $x'=0$.
\end{example}

\begin{figure}[htbp]
   \centering
   \includegraphics[scale=0.75]{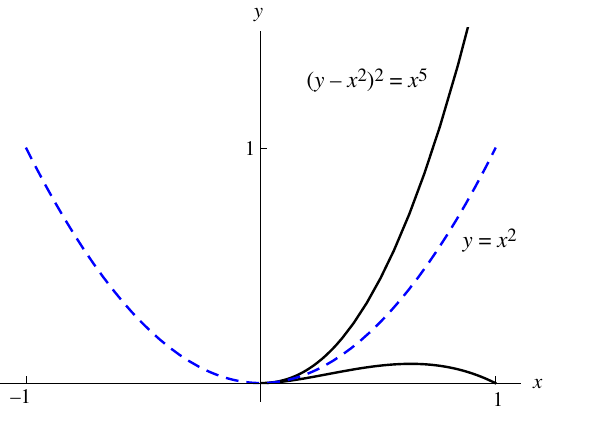} 
   \caption{A ramphoid cusp.}
   \label{ramphoid}
\end{figure}

In general one obtains the divisor at infinity $I_j$ by making
the inverted choice in going from level $j-1$ to level $j$.
In the resulting chart, $I_j$ is given by the vanishing
of the new coordinate $\nn_j$.
To obtain the prolongations $I_j[\ell]$,
one continues by making $\ell$ ordinary choices,
and $I_j[\ell]$ is given by the additional vanishing of 
$\nn_{j+1}$ through $\nn_{j+\ell}$.

To compare all these divisors at infinity and their prolongations,
let's pull everything back to a chosen level $k$.
The complete inverse image of $I_j$ will be denoted in the same way,
but now it's a divisor up on $S(k)$. With this convention,
we now have a nest
$$
I_j \supset I_j[1] \supset I_j[2] \supset \cdots \supset I_j[k-j].
$$

\begin{lemma} \label{compareloci}
Working at a point $p\in S(k)$, where $k\ge 2$, consider the focal  bundle
$\Delta(k)$. Let $3 \le j \le k$.
The loci $N(V)_j$ and $N(VT^{\tau})_j$
from (\ref{eq: vertical locus}) and (\ref{eq: VTtau locus}) are
the divisor at infinity $I_j$
and the prolongation
 $I_{j-\tau}[\tau]$.
\end{lemma}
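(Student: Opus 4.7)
The plan is to verify the statement by working in a chart around $p$ on $S(k)$ and translating the defining conditions of $N(V)$ and $N(VT^{\tau})$ into the monster coordinates of Section~\ref{monstercoordinates}, then matching the result with the explicit equations for $I_{j}$ and $I_{j-\tau}[\tau]$ given at the end of Section~\ref{strata}. One preliminary point: the subscript $j$ in the lemma's $N(V)_{j}$ and $N(VT^{\tau})_{j}$ should be understood via Lemma~\ref{comparecoordinates} as the divisor level (equivalently, the code-word position), so that $N(V)_{j}$ really denotes the locus $N(V)_{i}$ of~(\ref{eq: vertical locus}) at Lie-square index $i = k - j + 1$, and more generally $N(VT^{\tau})_{j}$ denotes $N(VT^{\tau})_{i}$ at $i = k - j + \tau + 1$; this is the only consistent reading that makes the range $3 \le j \le k$ meaningful for the loci on the right-hand side.

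For the base case $\tau = 0$, the analysis just after Lemma~\ref{sandwichequations} shows that the equation $\Delta(k)_{i}(p) = \cL(\Delta(k)_{i+2})(p)$ forces the ``inverted situation'' of Lemma~\ref{sandwichequations}\,(\ref{p1}) at the lower square, and cuts out $N(V)_{i}$ locally by $x' = 0$, where $x'$ is the new Kumpera--Ruiz coordinate appearing in the inverted equation $dx = x' dy'$. Applying Lemma~\ref{comparecoordinates} at Lie-square index $i = k-j+1$, this $x'$ is the monster coordinate $\nn_{j}$ that appears when the chart makes the inverted choice in passing from level $j-1$ to level $j$; the inverted situation for the lower square is then exactly the inverted choice at step $j$ of the chart construction of Section~\ref{monstercoordinates}, and the equation $x' = 0$ becomes $\nn_{j} = 0$. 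These are precisely the local defining equations of $I_{j}$ given in Section~\ref{strata}, so $N(V)_{j} = I_{j}$.

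For the inductive step $\tau \ge 1$, the recursive definition~(\ref{eq: VTtau locus}) adds the condition $D_{i-\tau}(p) = (\Theta_{N(VT^{\tau-1})_{i}} \cap D_{i-(\tau-1)})(p)$, which the $T$-analysis preceding~(\ref{eq: VTtau locus}) identifies with the ``ordinary situation with constant $C = 0$'' at the next square to the left. Lemma~\ref{comparecoordinates} translates this into the requirement that the chart make an ordinary choice at the next step and that the corresponding new monster coordinate vanish at $p$. Accumulating these equations over $\tau$ successive $T$'s yields $\nn_{j-\tau} = \nn_{j-\tau+1} = \cdots = \nn_{j} = 0$ in a chart that is inverted at step $j-\tau$ and ordinary at steps $j-\tau+1, \ldots, j$, matching verbatim the system of equations defining $I_{j-\tau}[\tau]$ at the end of Section~\ref{strata}. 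The main obstacle is the careful bookkeeping of the Lie-square index $i$, the monster-coordinate level $\ell$, and the divisor level $j$, threaded together by the identifications above; once this reindexing is in place, each step of the induction is a direct translation between the Kumpera--Ruiz framework of Section~\ref{RVTGoursat} and the chart framework of Sections~\ref{monstercoordinates} and~\ref{strata}.
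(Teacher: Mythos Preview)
Your proof is correct and follows the same route as the paper, whose entire proof reads ``This is immediate from the identification of Goursat and monster coordinates given in Lemma~\ref{comparecoordinates}.'' You have simply unpacked that citation---matching the Kumpera--Ruiz equations $x'=x''=\cdots=0$ from Section~\ref{RVTGoursat} against the chart equations $\nn_{j-\tau}=\cdots=\nn_{j}=0$ from Section~\ref{strata}---and along the way flagged the indexing convention (code-word position $j$ versus Lie-square index $i=k-j+1$) that the paper's one-line proof leaves implicit.
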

\begin{proof}
This is immediate from the identification of Goursat and monster coordinates
given in Lemma~\ref{comparecoordinates}.
\end{proof}

\section{Code words redux} \label{redux}

In Section~\ref{RVTGoursat} we have associated code words with Goursat distributions. We now associate similar code words with points on the monster spaces over a surface $S$, and to focal curve germs on these spaces. We also discuss how these code words are related to each other.

We define an \emph{RVT code word} to be a finite word in these three symbols, satisfying the following rules:
\begin{enumerate}
\item
The first symbol is $R$.
\item \label{trule}
The symbol $T$ may only be used immediately following a $V$ or $T$.
\end{enumerate}
For the Goursat words of Section~\ref{RVTGoursat}, by contrast, the first two symbols are required to be $RR$.
Given an RVT code word $W$, we associate with it a Goursat word 
$G(W)$ by this simple procedure: if there is a $V$ in second position, then replace it and any immediately succeeding $T$'s by $R$'s.
For example, if $W$ is $RVTTTVRVT$ then $G(W)$ is $RRRRRVRVT$.

\subsection{Code words of points}

Given a point $p \in S(k)$, the $k$th monster space over a surface $S$,
we associate with it an RVT code word of length $k$ by considering where it lies
with respect to the divisors at infinity and their prolongations.
Since the divisors at infinity arise beginning at level $2$,
the first symbol in the code word is an automatic $R$.
Here are the rules for assigning the symbol in position $j$:
\begin{enumerate}
\item \label{Vrule}
If $p$ lies on the $j$th divisor at infinity $I_j$,
then the $j$th symbol is $V$.
\item \label{Trule}
If $p$ lies on the prolongation $I_h[\ell]$,
where $\ell>0$ and $h+\ell=j$,
then the $j$th symbol is $T$.
\item
Otherwise the $j$th symbol is $R$.
\end{enumerate}

Rules~\ref{Vrule} and \ref{Trule} appear to clash,
but in fact it's impossible for $p$ to lie on both
$I_j$ and $I_h[\ell]$.
To see this, we consider assigning the symbols
one step at a time, beginning after the automatic $R$;
we look at the sequence of points going up the tower
to our specified point on $S(k)$.
At each step, we are considering a fiber of
$S(j+1)$ over a point $p \in S(j)$, a projective line.
Let $q$ be a point on the fiber of $S(j+1)$ over $p$.
One point on the fiber is the intersection with the
divisor at infinity $I_{j+1}$;
if $q$ is this point, then the new symbol is $V$.
If $p$ lies on $I_j$ or some $I_h[\ell]$,
there is a second special point
on the fiber, the intersection with the prolongation $I_j[1]$ or $I_h[\ell+1]$;
if $q$ is this point, then the new symbol is $T$.
These two special points are distinct.
This description also makes it clear that the code word associated with a point
satisfies rule~\ref{trule} for RVT code words.

\begin{example}
For the point $p$ of Example~\ref{liftexample}
the associated RVT code word is $RVTVR$;
for the origin of the chart $\cC(oioio)$ used there, the RVT code word is $RVTVT$.
\end{example}
%

\subsection{Code words of focal curve germs} \label{cwofcg}

To obtain the RVT code word of a regularizable focal curve germ
$C(k)$ on $S(k)$,
lift it upward through the 
monster tower until you reach the regularization level $r$.
The RVT code word records 
how the successive lifts meet the divisors at infinity
$I_{k+2}$ through $I_r$,
with a $V$ indicating that
we are on the divisor at infinity, a $T$ indicating that we are
on the prolongation of a divisor at infinity, and $R$ being used otherwise.
Also use an $R$ in the first position, i.e.,
ignore the divisor at infinity $I_{k+1}$
and its prolongations,
as well as those of the prior divisors at infinity $I_{2}$ through $I_k$.
To explain the process in a slightly different two-step way:
\begin{enumerate}
\item
Record how the curve and its
lifts meet the divisors at infinity $I_{k+1}$ through $I_r$
and their prolongations.
\item
If the resulting word begins with $VT^\tau$,
replace this string by $R^{\tau+1}$.
\end{enumerate}
The first step of this process produces a word ending with a critical
symbol $V$ or $T$, but this symbol may be changed to $R$ in the second step.
As a special case, we associate a code word with a curve $C$ on
a surface $S$. Here there is no divisor at infinity on $S(1)$,
so that the initial $R$ is automatic (unless the germ is nonsingular,
in which case the word is empty),
and the final symbol will definitely be critical.

\begin{example}
For the curve germ $C$ of Example~\ref{liftexample}
(whose regularization level is 4),
the associated RVT code word is $RVTV$;
for $C(1)$ through $C(4)$, each of which also has regularization level 4,
the associated code words are
$RRV$, $RV$, $R$, and the empty word.
The fifth lift $C(5)$ has regularization level 5 and its code word is likewise empty.
\end{example}

The reason we make this construction for focal curve germs,
rather than just curves on surfaces, is that it gives
us the flexibility to calculate other invariants recursively.
For example, it is used in the subsequent Section~\ref{PC}
to develop a front-end recursion for Puiseux characteristics.

By definition, all of our code words thus far are finite.
For later usage, however, it is convenient to associate
a code word of infinite length with a focal curve germ,
simply by padding out with an infinite string of $R$'s.
None of the invariants considered in this paper is altered
when one appends $R$'s to the end of a code word;
thus it makes sense to speak of the invariants
associated with such infinite words. For example,
$PC(RVVRRR\dots)=PC(RVV)=[3;5]$.

One could assign a code word to a nonregularizable focal curve germ,
but such a word would be of infinite length and would contain
an infinite string of $T$'s; we will not develop a theory of these words.

\subsection{Words of points vs.\ words of Goursat germs}\label{wpwg}

Recall that in Section~\ref{RVTGoursat} we associated a Goursat word with any Goursat distribution at a point.

\begin{theorem}
Working at a point $p\in S(k)$, consider the focal bundle $\Delta(k)$.
The Goursat word associated with $\Delta(k)$ at $p$ 
is $G(W)$, where $W$ is the RVT code word associated with $p$.
\end{theorem}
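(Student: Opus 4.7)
My plan is to use Lemma~\ref{compareloci} as a dictionary between the Goursat-word loci $N(V)_j$ and $N(VT^{\tau})_j$ of Section~\ref{RVTGoursat} and the RVT-word loci $I_j$ and $I_{j-\tau}[\tau]$ of Section~\ref{redux}. Once this identification is in hand, both algorithms assign $V$ and $T$ based on the same divisor-membership data, and the only structural difference is that the Goursat word must begin with $RR$ (its first two positions correspond to the two incomplete squares at the right end of the sandwich diagram), so it cannot host a $V$ at position~$2$ or a $T$-chain seeded there. The operation $G$ is designed precisely to account for this discrepancy.

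First I would dispose of positions $1$ and $2$ directly: both words have $R$ in position~$1$; the Goursat word has $R$ in position~$2$, while $W$ has $V$ there iff $p\in I_2$. If $p\notin I_2$, then $G$ acts as the identity on $W$ and Lemma~\ref{compareloci} yields position-by-position agreement on $\{3,\dots,k\}$. The interesting case is $p\in I_2$. Let $\tau\ge 0$ be maximal with $p\in I_2[\tau]$, so $W$ begins with $RVT^{\tau}$ followed by a non-$T$ symbol, and $G(W)$ begins with $\tau+2$ copies of $R$. I would then verify that the Goursat word also begins with $\tau+2$ copies of $R$: for each $j\in\{3,\dots,2+\tau\}$, the fact (justified at the end of Section~\ref{redux}) that $p$ cannot lie on both $I_j$ and $I_h[\ell]$ when $h+\ell=j$, applied with $h=2$ and $\ell=j-2$, gives $p\notin I_j$ and rules out a Goursat $V$ at position~$j$. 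A Goursat $T$ at position~$j$ would require a prior $V$ at some position $h\in\{3,\dots,j\}$, but the same non-clash applied at position~$h$ to the pair $I_h$ and $I_2[h-2]$ (valid since $p\in I_2[\tau]\subset I_2[h-2]$) gives $p\notin I_h$, so no such $V$ exists and no $T$-chain is available.

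Finally, for positions $j\ge 2+\tau+1$, maximality of $\tau$ rules out a $T$ in $W$ arising from the seed $h=2$ (such would require $p\in I_2[j-2]$ with $j-2>\tau$). Every $T$ in the tail of $W$ therefore has seed $h\ge 3$, which is equally realized by the Goursat construction, so Lemma~\ref{compareloci} yields position-by-position equality of the two words on $\{2+\tau+1,\dots,k\}$. Combining the three ranges gives the Goursat word equal to $G(W)$. The main obstacle is the middle step: one must unpack the recursive chain defining $N(VT^{\tau'})_j$ into positional data on $W$ and invoke the non-clash not only at the final position but at every intermediate position $h$ of a would-be $V$-seed; everything else reduces to a direct application of Lemma~\ref{compareloci} once the three ranges are isolated.
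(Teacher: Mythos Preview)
Your plan is correct, and it differs from the paper's argument in an instructive way. You carry out a direct position-by-position comparison, splitting on whether $p\in I_2$ and, in the interesting case, handling the initial $RVT^{\tau}$ block of $W$ by repeatedly invoking the non-clash between $I_j$ and $I_2[j-2]$ to show that no $V$ (hence no $T$-seed) can appear in the Goursat word at positions $3,\dots,2+\tau$. The paper instead observes that both constructions are \emph{recursive along the tower}: the Goursat word of $\Delta(k)$ at $p$ is the Goursat word of $\Delta(k-1)$ at $\pi(p)$ with one symbol appended, and likewise $G(W_p)$ is $G(W_{\pi(p)})$ with one symbol appended. This reduces the whole proof to checking agreement at the \emph{final} position only, which is read off from Lemma~\ref{compareloci}; the operator $G$ and the case $p\in I_2$ never need to be unpacked, because the induction starts at levels $1$ and $2$ where both words are forced to be $RR$. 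Your approach makes transparent exactly why the replacement $VT^{\tau}\mapsto R^{\tau+1}$ at position $2$ is the right normalization (it is forced by the non-clash), while the paper's approach is shorter and leverages the tower structure more economically. One small slip: the seed position of a putative Goursat $T$ at position $j$ should lie in $\{3,\dots,j-1\}$, not $\{3,\dots,j\}$; this does not affect the argument.
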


\begin{proof}
We compare the rules for computing the Goursat word
of $\Delta(k)$ at $p$ with
the rules for computing $G(W)$, where $W$ is the RVT code
word associated with $p$.
Because of the recursive nature of both sets of rules, it suffices to
show that they give the same result in the final position.

Lemma~\ref{compareloci} and (\ref{eq: vertical locus}) tell us that, for $k\ge3$,
\[
D_{k}(p)=\cL(D_{k+2})(p) \iff p \in I_k.
\]
Thus
the rules for whether to use a $V$ in the final position of
the code word agree.
Now suppose $k\ge \tau+3$, where $\tau\ge 1$, and that
$\pi(p)\in N(VT^{\tau-1})_k$,
where $\pi:S(k)\to S(k-1)$ is projection.
Then, again using Lemma~\ref{compareloci} and (\ref{eq: VTtau locus}),
\begin{align*}
\text{the Goursat word of} &\;\Delta(k)\; \text{at $p$ has final symbol $T$} \\
& \iff p \in N(VT^{\tau})_k \\
& \iff p \in I_{k-\tau}[\tau] \\
& \iff \text{the RVT code word of $p$ has final symbol $T$.}
\end{align*}
In all other situations, both sets of rules dictate a final symbol $R$.
\end{proof}

As we have explained in Section~\ref{universality},
the process of finding a point
$p_k \in S(k)$ representing a given Goursat distribution
can be interpreted as finding a sequence
of points $p_1\in S(1)$, $p_2\in S(2)$, etc.
Since all points of $S(2)$ represent the same Goursat distribution,
we may choose $p_2$ wherever we like.
In particular we may choose a point $p_2$ not lying on the divisor
at infinity $I_2$; in this case the RVT code word is already a Goursat word.

\subsection{Words of points vs.\ words of focal curves}

Suppose that $C(k)$ is a regularizable focal curve germ located at a point
$p\in S(k)$.
The RVT code word of $p$ and the RVT code word of $C(k)$
record complementary information.
Let $C$ be the curve germ on $S$
obtained by projecting $C(k)$.
Then the RVT code word of $p$
records the sequence of divisors at infinity
and prolongations of such divisors one encounters
when lifting $C$ through the tower to recover $C(k)$;
it is a word of length $k$.
The RVT code word of the germ $C(k)$ records the divisors
at infinity and prolongations that one encounters
as one continues to lift $C(k)$ until one reaches
the regularization level $r$;
it is a word of length $r-k$.

\begin{example} \label{141819}
Consider this focal curve germ $C(2)$ located at the origin of chart $\cC(oi)$:
\begin{align*}
x &= t^{14} \\
y &= 14(t^{18}+t^{19}) \\
y' &= t^{4}(18+19t)\\
x' &= \frac{14t^{10}}{72+95t}
\end{align*}
The code word of $p=(0,0;0,0)$ is $RV$.
The regularization level is 7, and the code word of
of $C(2)$ is $\text{\emph{RRVRV}}$.
Compare these code words with the code word of $C$:
\[
RV|\text{\emph{TTVRV}}
\]
We have inserted a vertical slash to indicate how one can
recover the words of $p$ and $C(2)$; to obtain a valid
RVT code word, one needs to replace the initial two $T$'s by $R$'s.
\end{example}

For a curve germ $C$ on $S$, its code word is the same
as the word of the location of $C(r)$ on $S(r)$. More
generally, as the example illustrates, one can find the
code word of $C(k)$ on $S(k)$ by finding the code word
of the location of $C(r)$, lopping off the first $k$ symbols,
and then replacing $V$'s and $T$'s as necessary to
obtain a valid RVT code word.

\begin{example}
In Example~\ref{141819}, the location of $C(7)$
in chart $\cC(oiooioi)$ is
\[
(0,0;0,0,0,0,0,\tfrac{8707129344}{1225},0)
\]
and the code word of this point is $\text{\emph{RVTTVRV}}$.
\end{example}

Going in the other direction, given a point
$p \in S(k)$ we can find a regular curve germ $C$ in $S$
whose $k$th lift passes through $p$;
one simply specifies appropriate parameterizations of the two
active coordinates and obtains the other parametric equations
by integration.
The code word of $C$ is then the same as the code word of $p$.

\subsection{The lifted word}

Suppose that $W$ is the RVT code word associated with a 
regularizable focal curve germ $C(k)$
on $S(k)$.
The code word of its lift $C(k+1)$ is the
\emph{lifted word} $L(W)$ obtained from $W$ by this recipe:
\begin{itemize}
\item
Remove the first symbol $R$.
\item
If the new first symbol is $V$, replace it by $R$,
and likewise replace any immediately succeeding $T$'s
by $R$'s. Stop when you reach the next $R$, the next $V$, or the end of the word.
\end{itemize}
Observe that this lifting procedure also applies to an infinite word.

\begin{example}
If $W$ is $\text{\emph{RVTTVRVRV}}$,
then $L(W)$ is $\text{\emph{RRRVRVRV}}$.
\end{example}

Going in the opposite direction, assume that we know $L(W)$
and want to know the possibilities for $W$.
There is a choice in the first step:
\begin{enumerate}
\item
Any string $R^{\tau}$ at the beginning of $L(W)$ can be replaced
by $VT^{{\tau}-1}$. The choice ${\tau}=0$ is allowed; this means don't
make a replacement at all.
\item
Put the symbol $R$ at the beginning.
\end{enumerate}
\begin{example} \label{4words}
If $L(W)$ is $\text{\emph{RRRVRVRV}}$, then there are four possibilities for $W$:
\[
\begin{gathered}
\text{\emph{RRRRVRVRV}} \\
\text{\emph{RVRRVRVRV}} \\
\text{\emph{RVTRVRVRV}} \\
\text{\emph{RVTTVRVRV}} \\
\end{gathered}
\]
\end{example}

\section{Comparison with embedded resolution} \label{blowup}

As noted in the Introduction, singularity theorists working in the context of 
algebraic geometry tend to work with point blowups of varieties rather than Nash modifications.
The ideas and calculations of earlier sections have counterparts in this context,
which we now briefly explain.
For a related treatment, see
\cite{MR2434453}, especially Sections 9 and 14.

If we begin, as in Section \ref{monstercoordinates}, with coordinates
$x= x_0$ and $y = y_0$ 
on a neigh\-bor\-hood $U$ on the surface $S$ which contains point $p$, then we may define \emph{blowup coordinates} on the $k$th blowup
$S\{k\}$ of $S$ in a recursive manner: the new coordinate is either 
\[
y_{j+1} = (y_{j}-y_{j}(p_{k-1}))/(x_{i}-x_{i}(p_{k-1}))
\]
or
\[
x_{i+1} = (x_{i}-x_{i}(p_{k-1}))/(y_{j}-y_{j}(p_{k-1})).
\]
where $p_{k-1}$ denotes a point of $S\{k-1\}$ in the fiber over $p$.
The sequence of points and the choice of which of the two formulas
to use is dictated by the curve on $S$ which one is studying.
Note the contrast with the monster construction,
which is carried out in a uniform manner, no matter what
curve is eventually to be studied.

\begin{example} \label{blowupexample}
Working with the curve $C$ parameterized by $x = t^5$, $y = t^7$, the blowup coordinate calculation 
over the origin analogous to that of Example~\ref{liftexample}  is
\begin{align*}
y_{1} &= y/x =t^2 \\
x_{1} &= x/y_{1} = t^3 \\
x_{2} &= x_{1}/y_{1} = t \\
y_{2} &= y_{1}/x_{2} = t \\
y_{3} &= y_{2}/x_{2}  = 1\,.
\end{align*}
From this we see that the strict transform $C_3 \subset S\{3\}$ is nonsingular.
\end{example}

In the context of embedded resolution, the symbols of the RVT code 
of Subsection~\ref{cwofcg}
have the following meanings:
\begin{itemize}
\item
The first symbol is automatically $R$.
\item
The symbol in position $j$ is $V$ if the strict transform $C_j$
meets the strict transform of the exceptional divisor $E_{j-1}$.
\item
The symbol in position $j$ is $T$ if $C_j$
meets the strict transform of some earlier exceptional divisor $E_{i}$
(with $i<j-1$).
This will necessarily be the same exceptional divisor that it met at the
prior level.
\item
The symbol in position $j$ is $R$ if $C_j$ meets only the newest
exceptional divisor $E_j$.
\end{itemize}

We say that the strict transform $C_j$ is \emph{regular}
if it is nonsingular and if it transverse to all the exceptional
divisors that it meets. The \emph{regularity level}
is the lowest level at which these conditions are met;
this definition agrees with that of Section~\ref{fcg}.

\section{Puiseux characteristics} \label{PC}

\subsection{The Puiseux characteristic of a focal curve germ}
\label{Puiseuxchar}

The Puiseux characteristic is a well-known 
invariant of a curve germ on a surface $S$,
but we wish to extend it to a broader context
and relate it to the RVT code.
Let $p$ be a point on $S(k)$
and let $C(k)$ be a regularizable focal curve germ
located at $p$.
Choose an ordered pair $(X,Y)$ of coordinate functions vanishing at $p$,
such that
\begin{itemize}
\item
the differentials $dX$ and $dY$ are independent linear functionals on the focal plane at $p$, and
\item
$X$ has the smallest possible order of vanishing.
\end{itemize}
Denoting this order by $n$,
introduce a parameter $t$ for which $X=t^n$.
Then $Y$ may be expressed as a formal power series
\begin{equation} \label{yseries}
Y = \sum a_i t^i
\end{equation}
where we write just those terms for which $a_i\ne 0$.
(To say this another way, we write $Y$ as a fractional power series
$Y = \sum a_i X^{i/n}$.)
An exponent in this series is called \emph{essential}
or \emph{characteristic}
if it is not divisible by the greatest common divisor
of the smaller exponents.
In this definition we count $n$ as an essential exponent;
thus the first essential exponent appearing in (\ref{yseries})
is the first exponent (if any) not divisible by $n$.
The set of essential exponents 
$\{n, \lambda_1, \dots, \lambda_g\}$
is finite and their greatest common divisor is 1.
Letting $\lambda_0=n$,
we define the \emph{Puiseux characteristic} to be
\[
[\lambda_0;\lambda_1, \dots, \lambda_g].
\]

For example, for the focal curve $C$ parameterized by
\begin{align*}
x &= s^4 \\
y &= s^6 + s^7 \\
y' &= \tfrac{3}{2} s^2 + \tfrac{7}{4} s^3
\end{align*}
we may take our first coordinate function to be $y'$
and thus use parameter 
$t=s \sqrt{\frac{3}{2} + \frac{7}{4} s}$.
For second coordinate we may use $x$, whose
expansion in the new parameter is
\[
x = \tfrac{4}{9} t^4 - \tfrac{28\sqrt{6}}{81} \, t^5 + \cdots
\]
the first term being inessential. The Puiseux characteristic
is $[2;5]$.
Observe that $y$ may not be used as our second coordinate 
function, since $dy = y' dx=0$ on the focal plane at $p$.

\subsection{The front-end recursion for Puiseux characteristic} \label{frontendPC}

In what follows,
we show that the RVT code word
associated with a regularizable focal curve determines its
Puiseux characteristic.
As we will see, the 
Puiseux characteristic can be computed by either of two
recursions:
\begin{enumerate}
\item
A front-end recursion whose step size is a single symbol of the word.
\item
A back-end recursion whose step size is a block of symbols.
\end{enumerate}
Here we state the front-end
recursion and prove
that it
is valid, i.e., that it correctly computes
the Puiseux characteristic of each focal curve.
Subsequently we will
describe the back-end recursion
and prove that it is a consequence of the
front-end recursion;
thus it is likewise valid.

The front-end recursion uses
the notion of the lifted word
$L(W)$
of an RVT code word $W$,
as defined in Section~\ref{redux}.

\begin{theorem} \label{ABCrules}
Suppose that $C(k)$ is a regularizable focal curve germ
located at a point $p$ on $S(k)$.
Its associated code word $W$
determines its
Puiseux characteristic.
If $W$ has
no critical symbols, then
the associated Puiseux characteristic, denoted
$\PC(W)$, is $[1;]$.
In general the Puiseux characteristic
is determined recursively by
the following rules.
Suppose that
\[
\PC(W) = [\lambda_0;\lambda_1,\dots,\lambda_g].
\]

\begin{enumerate}[label=(\Alph*)]
\item  \label{TypeA}
The word $W$ begins with $RR$ if and only if
$\lambda_1>2\lambda_0$. In this case, $\PC(L(W))$ is obtained from $\PC(W)$
by keeping $\lambda_0$ and subtracting it from all other entries:
\[
\PC(L(W))= [\lambda_0;\lambda_1-\lambda_0,\dots,\lambda_g-\lambda_0].
\]
\item \label{TypeB}
The word $W$ begins with $RVT^{\tau}R$
or is $W=RVT^{\tau}$ if and only if
\[
\lambda_0=({\tau}+2)(\lambda_1-\lambda_0)
\quad \text{and} \quad
\lambda_1=({\tau}+3)(\lambda_1-\lambda_0). 
\]
In this case
\[
\PC(L(W))= [\lambda_1-\lambda_0;\lambda_2-(\lambda_1-\lambda_0),
\dots,\lambda_g-(\lambda_1-\lambda_0)].
\]

(Note that this Puiseux characteristic is shorter.)
\item \label{TypeC}
The word $W$ begins with $RVT^{\tau}V$
if and only if
\[
({\tau}+1)(\lambda_1-\lambda_0) < \lambda_0 < ({\tau}+2)(\lambda_1-\lambda_0).
\]
In this case
\[
\PC(L(W))= [\lambda_1-\lambda_0;\lambda_0,\lambda_2-(\lambda_1-\lambda_0),
\dots,\lambda_g-(\lambda_1-\lambda_0)].
\]
\end{enumerate}
\end{theorem}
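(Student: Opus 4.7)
The plan is to compute explicitly how the Puiseux characteristic transforms under one lift in the monster tower, by choosing local coordinates that realize it and then case-splitting on the order ratio $\lambda_1/\lambda_0$.

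Start by disposing of the base case: when $W$ has no critical symbols, a direct local-coordinate argument shows that $C(k)$ is smooth at $p$ (either with non-critical tangent, in which case the base-direction coordinate has order $1$, or with critical tangent, in which case the fiber-direction coordinate has order $1$), so $\lambda_0 = 1$ and hence $\PC(W) = [1;]$, no essential exponent existing beyond $\lambda_0$. For the recursive step, assume $\PC(W) = [\lambda_0; \lambda_1, \dots, \lambda_g]$ with $g \geq 1$. Choose an ordered pair $(X,Y)$ realizing this Puiseux characteristic: parameterize $X = t^{\lambda_0}$, and by a routine polynomial change (which preserves independence of $dX, dY$ on the focal plane) assume the $Y$-series begins with $a_{\lambda_1} t^{\lambda_1}$. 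Because $\lambda_1 > \lambda_0$, the tangent of $C(k)$ at $p$ limits to $[dX : dY] = [1:0]$, placing $C(k+1)$ in the ordinary chart at level $k+1$ with new active coordinate $Y' = dY/dX$ of order $\mu = \lambda_1 - \lambda_0$ on the curve; essentiality of $\lambda_1$ rules out $\mu = \lambda_0$.

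Next, split on the sign of $\mu - \lambda_0$. In case (A), $\mu > \lambda_0$, the pair $(X, Y')$ realizes $\PC(L(W))$: the exponents of $Y'$ in $t$ are those of $Y$ shifted uniformly by $-\lambda_0$, and successive gcds are invariant under such uniform shifts, so the essential exponents transform as $\lambda_j \mapsto \lambda_j - \lambda_0$. A parallel analysis of the tangent of $C(k+1)$ at the new point $p_1$ shows it is horizontal (hence non-critical) exactly when $\mu > \lambda_0$, which forces $p_2 \notin I_{k+2}$ and symbol $2$ of $W$ to be $R$, proving the iff for case (A). In cases (B)/(C), $\mu < \lambda_0$, so $Y'$ now has smaller order, the tangent of $C(k+1)$ at $p_1$ is vertical, $p_2 \in I_{k+2}$ and symbol $2$ of $W$ is $V$; the pair $(Y', X)$ realizes $\PC(L(W))$ with first exponent $\mu$.

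To obtain the Puiseux formulas in cases (B)/(C), reparameterize by $s$ with $Y' = s^\mu$; then $s = t \cdot g(t)$ for an invertible series $g(t)$ determined by the tail of the $Y'$-expansion. Inverting and substituting into $X = t^{\lambda_0}$ yields a series in $s$ with exponents $\lambda_0, \lambda_2 - \mu, \lambda_3 - \mu, \dots, \lambda_g - \mu$. A gcd bookkeeping using $\gcd(\mu, \lambda_0) = \gcd(\lambda_1, \lambda_0) = d_1$ confirms that $\lambda_0$ is essential at level $k+1$ iff $\mu \nmid \lambda_0$ (case C) and that the $\lambda_j - \mu$ for $j \geq 2$ retain essentiality in either case. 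Finally, the value of $\tau$ is identified by iterating the lift: at the $j$-th step after entering the inverted chart at level $k+2$, the tangent of the new curve in the updated active-coordinate pair takes the form $[c_1 : c_2 \, t^{\lambda_0 - (j+2)\mu}]$, which is horizontal (symbol $T$) while $\lambda_0 > (j+2)\mu$, vertical (symbol $V$, case C with $\tau = j$) once $\lambda_0 < (j+2)\mu$, and of finite nonzero slope (symbol $R$, case B with $\tau = j$) precisely when $\lambda_0 = (j+2)\mu$. This pins down $\tau$ uniquely and matches the stated conditions.

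The \emph{main obstacle} is the careful bookkeeping in the fractional-power-series expansion of $X$ in the new parameter $s$: one must verify that the stated essential exponents $\lambda_0$ (in case C) and $\lambda_j - \mu$ (for $j \geq 2$) actually appear with nonzero coefficients, and that nonessential tail terms in the original $Y$-series do not contribute additional essential exponents in $s$. The iteration pinning down $\tau$ is conceptually straightforward but requires maintaining the correct sequence of up to $\tau + 2$ interleaved inverted and ordinary charts, each a routine but nontrivial coordinate calculation.
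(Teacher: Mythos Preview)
Your proposal is correct and follows essentially the same route as the paper: choose coordinates realizing the Puiseux characteristic, compute $Y'=dY/dX$, split on whether $\lambda_1>2\lambda_0$, and in the $\lambda_1<2\lambda_0$ case invert the series and track successive derivatives of $X$ with respect to $Y'$ to read off $\tau$. The one substantive difference is that where you propose to carry out the fractional-power-series inversion by hand (and rightly flag the coefficient and essentiality bookkeeping as the main obstacle), the paper dispatches this step by citing Casas-Alvero's inversion formula \cite[Proposition~5.6.1]{MR1782072}, which directly yields the two alternatives corresponding to cases~(B) and~(C); the paper's determination of $\tau$ is also slightly more streamlined, computing $\ord_t X^{(i)}=(i+1)\lambda_0-i\lambda_1$ in one line rather than iterating chart-by-chart.
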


%
%

\begin{example}
The Puiseux characteristics associated with the four words of Example~\ref{4words}
are
\begin{align*}
\PC(\text{\emph{RRRRVRVRV}}) &= [8;36,38,39] \\
\PC(\text{\emph{RVRRVRVRV}}) &= [16;24,36,38,39] \\
\PC(\text{\emph{RVTRVRVRV}}) &= [24;32,36,38,39] \\
\PC(\text{\emph{RVTTVRVRV}}) &=  [28;36,38,39] \\
\end{align*}
The first word is covered by Theorem \ref{ABCrules}\ref{TypeA}, the next two by
\ref{TypeB}, and the last word by \ref{TypeC}.
In each case $L(W)$ is $\text{\emph{RRRVRVRV}}$, so that $\PC(L(W))=[8;28,30,31]$.
\end{example}

The formulas of Theorem~\ref{ABCrules} already appear in 
Theorem 3.5.5 of Wall's textbook \cite{MR2107253},
but we remark that the circumstances are different in several ways:
Wall's formulas apply to a curve on a surface,
a special case of a focal curve;
they are obtained using blowups rather than lifts
(i.e., Nash modifications);
and he doesn't invoke RVT code words.
Nevertheless our proof is similar.

\begin{proof}
Let $X$ and $Y$ be chosen in accordance with our definition of Puiseux characteristic:
\begin{align*}
X&=t^{\lambda_0} \\
Y &= \sum a_i t^i.
\end{align*}
We may choose $Y$ so that the first term in its series is $a_{\lambda_1} t^{\lambda_1}$.
For the lifted curve we have available a new coordinate function .
\[
Y'=dy/dx=\sum \frac{i}{\lambda_0}a_i t^{i-\lambda_0}.
\]

If $\lambda_1>2 \lambda_0$, then $X$ continues to have the lowest order of vanishing;
thus $(X,Y')$ is an appropriate pair of coordinate functions for computing the Puiseux
characteristic of the lifted curve, and we obtain the formula of \ref{TypeA}.
The next stage in lifting involves $Y''=dY/dX$, so that the code word begins with $RR$.

If $\lambda_1<2 \lambda_0$, then $\lambda_1-\lambda_0< \lambda_0$ 
and
we can use $(Y',X)$ as an appropriate ordered 
pair of coordinate
functions.
Let $[\widetilde{\lambda}_0;\widetilde{\lambda}_1,\ldots]$ be the Puiseux characteristic of the lifted curve;
we observe that 
$\widetilde{\lambda}_0=\lambda_1-\lambda_0$.
To compute the other characteristic exponents we need to invert the fractional
power series for $Y'$.
The formulas for this inversion are well known:
we apply 
\cite[Proposition 5.6.1]{MR1782072},
replacing Casas-Alvero's
$[n;m_1,\dots]$
by
$[\lambda_1-\lambda_0;\widetilde{\lambda}_1,\dots]$
and his
$[\overline{n};\overline{m}_1,\dots]$
by
$[\lambda_0;\lambda_1-\lambda_0,\dots]$.
The proposition tells us that
there are two mutually exclusive possibilities:
\begin{itemize}
\item
The leading characteristic exponent
$\lambda_1-\lambda_0$ divides
$\lambda_0$,
and the other characteristic exponents
are determined by
\[
(\lambda_{i+1}-\lambda_0)+\lambda_0=\widetilde{\lambda}_i
+(\lambda_1-\lambda_0).
\]
\item
The first two characteristic exponents are
$\lambda_1-\lambda_0$ and 
$\widetilde{\lambda}_1=\lambda_0$;
the remaining characteristic exponents
are determined by
\[
(\lambda_{i}-\lambda_0)+\lambda_0=\widetilde{\lambda}_i
+(\lambda_1-\lambda_0).
\]
\end{itemize}
These two possibilities lead to the formulas
for $\PC(L(W))$
of \ref{TypeB} and \ref{TypeC}.

In order to compute the associated code word, we examine the 
successive derivatives $X',X'',X^{(3)},\dots$
of $X$ with respect to $Y'$.
The order of vanishing in $t$ of $X^{(i)}$ is
$(i+1)\lambda_0-i\lambda_1$.
In case  \ref{TypeB}, let ${\tau}$ be the nonnegative integer for which
$\lambda_0=({\tau}+2)(\lambda_1-\lambda_0)$.
In this case the order of vanishing of $X^{(i)}$ is 
$({\tau}+2-i)(\lambda_1-\lambda_0)$,
which is positive when $i<{\tau}+2$ and becomes
zero when $i={\tau}+2$.
Thus when we evaluate at $t=0$ we obtain 0
for the first ${\tau}+1$ derivatives, whereas $X^{({\tau}+2)}$
has a nonzero value.
Thus the code word begins with $RVT^{{\tau}}$;
if this is not the entire word then the next symbol
is $R$.
Similarly, in case \ref{TypeC} we let ${\tau}$
be such that
\[
({\tau}+1)(\lambda_1-\lambda_0) < \lambda_0 < ({\tau}+2)(\lambda_1-\lambda_0),
\]
and observe that the first ${\tau}+1$ derivatives evaluate
to 0. The next derivative $X^{({\tau}+2)}$ becomes
infinite at $t=0$, which means that the associated symbol
is $V$. Thus the code word begins with $RVT^{\tau}V$.
\end{proof}

Here we restate the recipes
of Theorem~\ref{ABCrules}
in a manner better suited for straightforward calculation.

\begin{theorem} \label{PCfront}
Suppose that the Puiseux characteristic of $L(W)$ is
\[
\PC(L(W)) = [\lambda_0;\lambda_1,\dots,\lambda_g].
\]

\begin{enumerate}[label=(\Alph*)]
\item
If $W$ begins with $RR$, then
$\PC(W)= [\lambda_0;\lambda_1+\lambda_0,\dots,\lambda_g+\lambda_0]$.
\item
If $W$ begins with $RVT^{\tau}R$
or if $W=RVT^{\tau}$,
then
\[
\PC(W)= [({\tau}+2)\lambda_0;({\tau}+3)\lambda_0,\lambda_1+\lambda_0,\dots,\lambda_g+\lambda_0].
\]
(This is a longer Puiseux characteristic.)
\item
If $W$ begins with $RVT^{\tau}V$, then
$\PC(W)= [\lambda_1;\lambda_1+\lambda_0,\dots,\lambda_g+\lambda_0]$.
\end{enumerate}
\end{theorem}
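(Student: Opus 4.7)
The plan is to invert the formulas of Theorem~\ref{ABCrules} case by case. To keep the two sets of characteristic exponents clearly distinguished during the derivation, I will temporarily write $\PC(W) = [\lambda_0'; \lambda_1', \ldots]$ while retaining the notation $\PC(L(W)) = [\lambda_0; \lambda_1, \ldots, \lambda_g]$ of the statement. In each case, Theorem~\ref{ABCrules} already expresses each $\lambda_i$ as an explicit linear combination of the $\lambda_j'$; I simply solve those linear relations for the $\lambda_j'$ and then drop the primes.

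In Case (A), Theorem~\ref{ABCrules}\ref{TypeA} yields $\lambda_0 = \lambda_0'$ and $\lambda_i = \lambda_i' - \lambda_0'$ for $i \geq 1$, whence $\lambda_i' = \lambda_i + \lambda_0$, which is the displayed formula. In Case (C), Theorem~\ref{ABCrules}\ref{TypeC} gives $\lambda_0 = \lambda_1' - \lambda_0'$, $\lambda_1 = \lambda_0'$, and $\lambda_i = \lambda_i' - (\lambda_1' - \lambda_0')$ for $i \geq 2$; solving these produces $\lambda_0' = \lambda_1$, $\lambda_1' = \lambda_1 + \lambda_0$, and $\lambda_i' = \lambda_i + \lambda_0$ for $i \geq 2$, as claimed.

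The only place requiring a modest dose of care is Case (B), since there the Puiseux characteristic of $W$ is \emph{longer} than that of $L(W)$ by one entry, so an index shift is unavoidable. Here Theorem~\ref{ABCrules}\ref{TypeB} supplies the two hypothesis equalities $\lambda_0' = (\tau+2)(\lambda_1' - \lambda_0')$ and $\lambda_1' = (\tau+3)(\lambda_1' - \lambda_0')$ together with the relations $\lambda_0 = \lambda_1' - \lambda_0'$ and $\lambda_i = \lambda_{i+1}' - (\lambda_1' - \lambda_0')$ for $i \geq 1$. Substituting $\lambda_1' - \lambda_0' = \lambda_0$ into the first pair gives $\lambda_0' = (\tau+2)\lambda_0$ and $\lambda_1' = (\tau+3)\lambda_0$, and the relation $\lambda_{i+1}' = \lambda_i + \lambda_0$ produces the remaining entries of $\PC(W)$. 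No serious obstacle arises: the theorem is essentially a cosmetic rewriting of Theorem~\ref{ABCrules} into a form better suited for forward calculation, and the sole genuine bookkeeping point is the length change and index shift in Case (B).
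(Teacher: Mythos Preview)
Your proposal is correct and matches the paper's approach: the paper presents Theorem~\ref{PCfront} explicitly as a restatement of Theorem~\ref{ABCrules} ``in a manner better suited for straightforward calculation,'' and offers no separate proof. Your case-by-case inversion of the formulas, including the careful handling of the index shift in Case~(B), is exactly the unwinding the paper leaves implicit.
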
 

\begin{example} \label{frontendexample}

Let
$W = RRVTRRRVTTTV$.
To compute the Puiseux characteristic using the front-end recursion, we start with 
$$ \PC(R)=[1;\ ].
$$
Theorem \ref{PCfront}(B) with $\tau=0$ gives 
$$ \PC(RV)=[2;3 ].
$$
Then using Theorem  \ref{PCfront}(A) three times gives 
$$ \PC(RRRRV)=[2;9 ].
$$
Theorem  \ref{PCfront}(C) gives 
$$ \PC(RVTTTV)=[9;11 ].
$$
Using Theorem  \ref{PCfront}(A) four times gives 
$$ \PC(RRRRRVTTTV)=[9;47 ].
$$
Then Theorem  \ref{PCfront}(B) with $\tau=1$ gives 
$$ \PC(RVTRRRVTTTV)=[27;36,56 ].
$$
Finally, Theorem  \ref{PCfront}(A)  gives 
$$ \PC(RRVTRRRVTTTV)=[27;63,83 ].
$$

\end{example}

\subsection{The back-end recursion for Puiseux characteristic} \label{backendPC}
 
We now turn to the back-end recursion.
This is similar to what is found in 
\cite[Section 3.8]{MR2599043} and \cite{MR3152113},
but our circumstances are slightly broader and
the details of our recursion are slightly different.
 A contiguous substring in an RVT code word is called
 \emph{critical} if the last symbol is $V$ or $T$;
we call it
\emph{entirely critical} if each of its symbols is $V$ or $T$ (i.e., if it
contains no $R$'s).
Appending an $R$ to the end of a code word doesn't change
the associated Puiseux characteristic, and for a word consisting
entirely of $R$'s (including the empty word) the associated Puiseux characteristic is $[1;]$.
Thus the back-end recursion deals with critical words.

 We first define two auxiliary functions $E_T$ and $E_V$
 acting on ordered pairs of positive integers:
 \begin{align*}
 E_T[a;b]&=[a;a+b] \\
  E_V[a;b]&=[b;a+b].
 \end{align*}
 We recursively define a function $E$ from entirely critical strings
 to ordered pairs of positive integers:
\begin{equation*}
\begin{aligned}
E(\text{empty string}) &=[1;2] \\
E(TQ) &= E_T(E(Q)) \\
E(VQ) &= E_V(E(Q)).
\end{aligned}
\end{equation*}
(In these formulas $Q$ denotes an entirely critical string, possibly empty.)
In fact, we can extend it to strings of the form $R^{{\rho}}Q$, where $Q$
is entirely critical, by defining $E_R[a;b]=[a;a+b]$
(so that $E_R$ is the same as $E_T$) and
$$
E(RQ) = E_R(E(Q)).
$$
For example,
$$
E(R^4 V^2)=E_R\circ E_R\circ E_R\circ E_R\circ E_V\circ E_V[1;2]=[3;17].
$$

\begin{theorem} \label{PCback}
The Puiseux characteristic of a critical RVT code word is determined as follows.
Write the code word $W$ as
$PR^{{\rho}}Q$, where 
$P$ is either empty or critical,
${\rho} \geq 1$,
and
$Q$ is entirely critical. 
(Note that the substring $Q$ must begin with a $V$,
since we want $W$ to be a valid code word.)
Suppose that
\begin{equation*}
\begin{aligned}
\PC(P) &= [\lambda_0;\lambda_1,\ldots,\lambda_g] 
\quad \text{and}\\
E(R^{{\rho}}Q) &= [a;b].
\end{aligned}
\end{equation*}
Then
\[
\PC(W) 
= [a \lambda_0; a \lambda_1,\ldots, a \lambda_g, a \lambda_g +b-2a].
\]
\end{theorem}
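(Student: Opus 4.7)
My plan is to prove Theorem~\ref{PCback} by strong induction on the length $|W|$, using the front-end recursion of Theorem~\ref{PCfront} as the descent mechanism. For each critical word $W = P R^{\rho} Q$, the lifted word $L(W)$ is strictly shorter, so by the inductive hypothesis we know $\PC(L(W))$ via the back-end formula applied to some decomposition $L(W) = P' R^{\rho'} Q'$. One of the three rules (A), (B), (C) of Theorem~\ref{PCfront} then converts $\PC(L(W))$ into $\PC(W)$, and we check term-by-term that the result agrees with the claimed back-end formula for $W$. The base case is $W = RV$, where $P$ is empty, $\rho = 1$, $Q = V$; direct computation yields $E(RV) = [2;5]$, so the formula predicts $[2;3]$, which matches $\PC(RV) = [2;3]$ obtained from Theorem~\ref{PCfront}(B) with $\tau = 0$.

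The inductive step divides into three cases according to which clause of Theorem~\ref{PCfront} applies to $W$. The generic scenario is that $L$ acts only within $P$, preserving the final letter of $P$: this happens whenever $P$ begins with $RR$ (case (A)), with $RVT^{\tau} V$ (case (C)), or with $RVT^{\tau}R$ followed by additional letters still inside $P$ (case (B) with $|P| > \tau + 2$). In these subcases $L(P)$ remains critical, so the decomposition of $L(W)$ is simply $L(P)\cdot R^{\rho} Q$ with $|L(P)| = |P|-1$. The front-end recipe then yields the same transformation relating $\PC(W)$ to $\PC(L(W))$ as it does relating $\PC(P)$ to $\PC(L(P))$; combining the inductive hypothesis $\PC(L(W)) = [a\mu_0; a\mu_1,\ldots, a\mu_h + b - 2a]$ with the front-end expression for $\PC(P)$ in terms of $\PC(L(P)) = [\mu_0;\mu_1,\ldots,\mu_h]$ reproduces the claimed formula for $\PC(W)$ exactly.

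The main obstacle is the collapse case: when $P = RVT^{\tau}$ for some $\tau \geq 0$ (including $P = RV$), or when $P$ is already empty in case (A), the word $L(P)$ is a string of $R$'s containing no critical letters, and the decomposition of $L(W)$ must be taken with $P'$ empty, absorbing the leftover $R$'s into the middle block, so that $L(W) = R^{\rho+\tau+1} Q$ or $L(W) = R^{\rho-1}Q$ as appropriate. In these subcases the inductive hypothesis delivers $\PC(L(W)) = [a'; b'-a']$, and one must verify that combining this with the appropriate front-end rule recovers $\PC(W) = [a\lambda_0;\ldots,a\lambda_g, a\lambda_g + b - 2a]$ for the original decomposition of $W$. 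This verification rests on the identities $E_R = E_T$, acting as $[a;b]\mapsto [a;a+b]$, and $E_V$ acting as $[a;b]\mapsto [b;a+b]$, which yield the required algebraic relations among the composed operators $E_R^{\rho}$, $E_V\circ E_T^{\tau}$ and so forth applied to $E(Q)$. Once this bookkeeping is dispatched, the induction on $|W|$ closes cleanly.
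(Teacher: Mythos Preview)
Your approach is essentially the paper's: induct on $|W|$, note that $L(W)=L(P)R^{\rho}Q$, and use the three cases of Theorem~\ref{PCfront} to pass from $\PC(L(W))$ to $\PC(W)$. The paper, however, organizes the induction in two stages: it first proves the $P$-empty case as a standalone identity (formula~(\ref{11A}), namely $\PC(R^{\rho}Q)=E(R^{\rho-1}Q)$), and only then runs the main induction assuming $P$ is critical. This separation is what makes the ``collapse'' bookkeeping routine.

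Your single-induction plan has two small holes in its case enumeration. First, the base case $W=RV$ is not enough: for every $\tau\ge 0$ the word $W=RVT^{\tau}$ has $L(W)=R^{\tau+1}$, which is not critical, so the inductive hypothesis does not apply and each such $W$ must be handled directly (or via the $P$-empty formula). Second, your collapse case lists ``$P=RVT^{\tau}$'' and ``$P$ empty in case~(A),'' but omits $P$ empty with $\rho=1$. In that situation $W=RQ$ begins with $RVT^{\tau}V\cdots$ or equals $RVT^{\tau}$, so you are in case~(B) or~(C), not~(A), and $L(W)=R^{\tau+1}Q'$ with a new $\rho'=\tau+1$; this subcase must be checked as well. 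None of this is difficult---the identity $E(R^{\tau+1+\rho}Q)=E_R^{\tau+1}\bigl(E(R^{\rho}Q)\bigr)=[a;(\tau+1)a+b]$ does exactly the job you describe---but the enumeration as written is incomplete. Following the paper and isolating the $P$-empty formula first makes all of these subcases fall out uniformly.
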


If $W=R^{{\rho}}Q$, i.e., if $P$ is the empty word,
then (applying our rules with $g=0$) we have
\begin{equation*}
\PC(P)=[1;] \quad \text{and} \quad
\PC(W)=[a;b-a].
\end{equation*}
In other words, the back-end recursion tells us that
\begin{equation} \label{11A}
\PC(R^{\rho}Q)=E(R^{{\rho}-1}Q).
\end{equation}
Within this recursion,
the subroutine of computing the function $E$ is a front-end recursion.
In fact it's essentially a special case of the front-end recursion
of Theorem  \ref{PCfront}. Indeed,
we can prove formula (\ref{11A})
by induction on the length of the word.
To begin, we remark that for a word with a single $V$ 
we have agreement:
\begin{align*}
\PC(R^{\rho}VT^{\tau})&=[\tau+2;\rho(\tau+2)+1] \\
E(R^{\rho-1}VT^{\tau})&=[\tau+2;\rho(\tau+2)+1].
\end{align*}
Thus in the inductive step it suffices to consider a word
$W=R^{\rho}VT^{\tau}Q'$, where $Q'$ begins
with a $V$.
Let $\PC(L(W))=[\lambda_0; \lambda_1]$. There are two cases to consider:
\begin{itemize}
\item
If $\rho \ge 2$, then by the inductive hypothesis
\[
\PC(R^{\rho-1}VT^{\tau}Q')=E(R^{\rho-2}VT^{\tau}Q')=[\lambda_0;\lambda_1].
\]
According to Theorem \ref{PCfront}(A), 
\[
\PC(R^{\rho}VT^{\tau}Q')=[\lambda_0;\lambda_1+\lambda_0],
\]
in agreement with
\[
E(R^{\rho-1}VT^{\tau}Q')
=E_R(E(R^{\rho-2}VT^{\tau}Q'))=[\lambda_0;\lambda_1+\lambda_0].
\]
\item
If $\rho=1$, then by the inductive hypothesis
\[
\PC(R^{\tau+1}Q')=E(R^{\tau}Q')=[\lambda_0;\lambda_1].
\]
According to Theorem \ref{PCfront}(C), 
\[
\PC(RVT^{\tau}Q')=[\lambda_1;\lambda_1+\lambda_0],
\]
in agreement with
\[
E(VT^{\tau}Q')=E_V(T^{\tau}Q'))=E_V(R^{\tau}Q')=[\lambda_1;\lambda_1+\lambda_0].
\]
\end{itemize}

\begin{example}

As in Example \ref{frontendexample}, let $W = RRVTRRRVTTTV$.
To compute its Puiseux characteristic using the back-end recursion, we start by 
using formula (\ref{11A}):
$$ \PC(RRVT)=E(RVT)=[3;7].
$$
Now compute $E(RRRVTTTV)=[9;38]$, so that Theorem \ref{PCback} gives
$$ \PC(RRVTRRRVTTTV)=[9\cdot 3; 9\cdot 7, 9\cdot 7 + 38 -2\cdot 9]=[27;63,83].
$$
\end{example}

\begin{proof}[Proof of Theorem \ref{PCback}]
To prove the formula for $\PC(W)$, we 
use induction on the length of $W$.
For a word with a single entirely critical block,
we have already confirmed formula (\ref{11A}).
Thus in the inductive step it suffices to consider a word
$W=PR^{{\rho}}Q$, where 
$P$ is critical,
${\rho} \geq 1$,
and
$Q$ is entirely critical.
We remark that the lifted word is
\[
L(W) = L(P) R^{{\rho}}Q.
\]
Let $\PC(L(P))=[\lambda_0;\lambda_1,\ldots,\lambda_g]$
and $E(R^{{\rho}}Q) = [a;b]$.
By the inductive hypothesis we know that
\[
\PC(L(W))
=[a \lambda_0; a \lambda_1,\ldots, a \lambda_g, a \lambda_g +b-2a].
\]
We now consider the
three cases of Theorem \ref{PCfront}:

\begin{enumerate}[label=(\Alph*)]
\item
If $P$ begins with $RR$, then \ref{PCfront}(A) tells us that
\begin{align*}
\PC(P)&= [\lambda_0;\lambda_1+\lambda_0,\dots,\lambda_g+\lambda_0] \\
\PC(W)&=  [a \lambda_0;a \lambda_1+ a \lambda_0,\dots,
a \lambda_g+a \lambda_0,a \lambda_g +b-2a+a \lambda_0] \\
&=[a \lambda_0;a (\lambda_1+ \lambda_0),\dots,
a (\lambda_g+ \lambda_0),a (\lambda_g+ \lambda_0) +b-2a].
\end{align*}

\item
If $P$ begins with $RVT^{\tau}R$
or if $P=RVT^{\tau}$,
then \ref{PCfront}(B) tells us that
\begin{align*}
\PC(P)&= [({\tau}+2)\lambda_0;({\tau}+3)\lambda_0,\lambda_1+\lambda_0,\dots,\lambda_g+\lambda_0] \\
\PC(W)
&=  [({\tau}+2)a\lambda_0;({\tau}+3)a\lambda_0, \\
&\qquad a\lambda_1+a\lambda_0,\dots,a\lambda_g+a\lambda_0,
a\lambda_g+b-2a+a\lambda_0] \\
&=  [a({\tau}+2)\lambda_0;a({\tau}+3)\lambda_0, \\
&\qquad a(\lambda_1+\lambda_0),\dots,a(\lambda_g+\lambda_0),
a(\lambda_g+\lambda_0)+b-2a] .
\end{align*}

\item
If $P$ begins with $RVT^{\tau}V$, then \ref{PCfront}(C) tells us that
\begin{align*}
\PC(P)&= [\lambda_1;\lambda_1+\lambda_0,\dots,\lambda_g+\lambda_0] \\
\PC(W)&= [a \lambda_1;a \lambda_1+a \lambda_0,\dots,a \lambda_g+a \lambda_0,
a \lambda_g+b-2a+a \lambda_0] \\
&=[a \lambda_1;a(\lambda_1+\lambda_0),\dots,a(\lambda_g+\lambda_0),
a(\lambda_g+\lambda_0)+b-2a].
\end{align*}

\end{enumerate}

\end{proof}

\subsection{From Puiseux characteristic to code word} \label{our}

The Puiseux characteristic and the RVT code word are equivalent
pieces of information. Having obtained two recursive recipes for computing
the Puiseux characteristic from a given code word, we now explain how to
reverse these recipes.

In the case of front-end recursion, the reversed recipe can be 
inferred from the cases of Theorem \ref{ABCrules}.
We illustrate the process by an example.
\begin{example}
Suppose that $\PC(W) = [10;15, 27]$. This puts us in case \ref{TypeB} of Theorem \ref{ABCrules} with ${\tau} = 0$; thus $W$ begins with $\text{\emph{RVR}}$ and $\PC(L(W)) = [5; 22]$.
Invoking case \ref{TypeA} three times, we learn that $L(W)$, $L^2(W)$, and $L^3(W)$ all begin with $RR$, so that
\[    W = \text{\emph{RVRRR}}\dots    \]
and $\PC(L^4(W)) = [5; 7]$. Using case \ref{TypeC} with ${\tau} = 1$, we find that $L^4(W)$ begins with $RVTV$ and that $\PC(L^5(W)) = [2; 5]$. By case \ref{TypeA}, $L^5(W)$ begins with $RR$ (but this is redundant information) and $\PC(L^6(W)) = [2; 3]$. Finally we use case \ref{TypeB} with ${\tau} = 0$ to infer that $L^6(W)$ begins with $\text{\emph{RVR}}$. Thus 
\[
W = \text{\emph{RVRRRVTVR}}.
\]

\end{example}

For the back-end recursion, the reverse recipe is essentially what
is explained in \cite[Section 3.8.4]{MR2599043}
and in \cite[Section 2.4]{MR3152113},
although there the circumstances are slightly different. 
We denote the map from Puiseux characteristics to critical RVT code words by $\CW$.
We first describe an auxiliary map which we denote by $\Euc$;
given a pair $(a,b)$ of relatively prime positive integers with $a<b$,
it produces a string of $V$'s and $T$'s.
We define $\Euc(1,2)$ to be the empty word.
Otherwise we recursively define
$$
\Euc(a,b)=
\begin{cases}
V \text{ followed by } \Euc(b-a,a) \quad \text{ if } b<2a,\\
T  \text{ followed by } \Euc(a,b-a) \quad \text{ if } b>2a.
\end{cases}
$$
For example,
$$
\Euc(2,7)=T\Euc(2,5)=TT\Euc(2,3)=TTV.
$$
And here is an example using Fibonacci numbers: $\Euc(F_{k+2},F_{k+3})=V^k$.

We now describe the $\CW$ map. First suppose that the Puiseux characteristic is simply 
$[\lambda_0;\lambda_1]$. To obtain $\CW[\lambda_0;\lambda_1]$,
first compute $\Euc(\lambda_0,\lambda_1)$, then
replace the initial string of $T$'s (if any) by the same-length string of $R$'s
and put a single $R$ in front.
For example, $\CW[2;7]=RRRV$.
For a longer Puiseux characteristic $[\lambda_0;\lambda_1,\dots,\lambda_{k+1}]$, let
\begin{equation*}
\begin{aligned}
a &= \GCD(\lambda_0,\lambda_1,\dots,\lambda_{k}),\\
s &= \left\lfloor \frac{\lambda_{k+1}-\lambda_{k}}{a} \right\rfloor + 1, \\
b &= a \left( \frack \left( \frac{\lambda_{k+1}-\lambda_{k}}{a} \right) + 1 \right)
\end{aligned}
\end{equation*}
(using the floor function and the fractional-part function as indicated).
Then inductively define
$$
\CW[\lambda_0;\lambda_1,\dots,\lambda_{k+1}]
=\CW\left[\frac{\lambda_0}{a};\frac{\lambda_1}{a},\dots,\frac{\lambda_k}{a}\right]
R^s \Euc(a,b).
$$
For example,
\begin{equation*}\CW[4;6,7]
=\CW[2;3]R\Euc(2,3)=\text{\emph{RVRV}}
\end{equation*}
(since $a=2$, $s=1$, and $b=3$).

\subsection{The Puiseux characteristics of points and Goursat germs}

Given a point $p_k \in S(k)$, we associate with it a well-defined Puiseux
characteristic, namely the Puiseux characteristic associated with
any regular focal curve germ passing through $p_k$.
Similarly, given a Goursat germ of corank $k$,
recall from Section~\ref{wpwg} that we can choose a representative point
on $S(k)$ that avoids the second divisor at infinity $I_2$. All such points have the
same associated Puiseux characteristic. Thus we may speak of the Puiseux characteristic
of a Goursat germ. This is a \emph{restricted Puiseux characteristic},
meaning that it satisfies the inequality $\lambda_1>2 \lambda_0$.
If instead we had chosen a point lying on $I_2$, to obtain the restricted
Puiseux characteristic we would need to replace $\lambda_0$
by the remainder obtained by dividing $\lambda_1$ by $\lambda_0$.

The version in \cite{MR2599043} and \cite{MR3152113} of the algorithm for obtaining the Puiseux characteristic 
is applied to code words in which the first symbol is at level 3. In comparison
with our algorithm, here's how it works: first put $RR$ at the beginning, 
thus obtaining a Goursat word,
and then
apply our algorithm. Note that this yields a restricted Puiseux characteristic. 
For the algorithm in the opposite direction, 
begin with a restricted Puiseux characteristic.
If we apply our $\CW$ map,
we obtain a code word beginning with $RR$; the algorithm in \cite{MR2599043} and \cite{MR3152113} jettisons
these initial symbols.


\section{The multiplicity sequence} \label{multseq}

One of the standard invariants in the theory of curves on surfaces is the
\emph{multiplicity sequence}, as explained, e.g., in Section 3.5 of  \cite{MR2107253}.
The usual definition employs embedded resolution of singularities,
whereas for present purposes we want to use lifting into the monster tower;
the two definitions give identical sequences.
We also want to generalize the situation, by defining the
multiplicity sequence associated with any regularizable focal curve germ.

Consider a regularizable focal curve germ $C(k)$ located at point $p_k$ on the monster space $S(k)$.
For $j\ge k$,
the symbol
$m_j$ denotes the multiplicity of $C(j)$ at its location $p_{j}$ on $S(j)$.
The \emph{multiplicity sequence}
of $C(k)$
is the sequence $m_k,m_{k+1},m_{k+2},\dots$.
Note that
$m_j$ is the leading entry of the Puiseux characteristic $\PC(L^{j-k}(W))$,
where $W$ is the associated infinite code word of Section~\ref{cwofcg}.
One can regard each multiplicity as being associated
with a symbol of the word, except for the leading multiplicity $m_k$,
as indicated in Figure~\ref{proxdiagram}.
(The edges will be explained momentarily.)
This infinite code word ends in an infinite string of $R$'s,
for which each of the associated multiplicities is 1.

Our recursion for computing the Puiseux characteristics of lifted curves
yields a recursion for computing the multiplicity sequence from the code word.

\begin{example}

For the word $W=\text{\emph{RVTVVRRR}}\dots$,
the Puiseux characteristics are as follows:
\begin{align*}
\PC(RVTVV) &= [8;11] \\
\PC(RRVV) &= [3;8] \\
\PC(RVV) &= [3;5] \\
\PC(RV) &= [2;3] \\
\PC(R) &= [1; ] \\
\PC( ) &= [1; ] 
\end{align*}
Thus the multiplicity sequence is $8,3,3,2,1,1,1,\dots$.
\end{example}

An alternative recursion uses the proximity diagram.
Since our situation is different
from that of Wall \cite{MR2107253},
we cannot simply cite his definitions,
but eventually we will obtain a formula identical to that
of his Proposition 3.5.1(iii).
Here are our definitions.
Looking at the sequence of points $p_k$, $p_{k+1}$, $p_{k+2}$,
etc.,
we say that $p_j$ is \emph{proximate} to $p_i$ if
either $j=i+1$, or
$p_j$ lies on the prolongation of the divisor at infinity $I_{i+2}$,
where $i\ge k$.
The  \emph{proximity diagram} represents each point of the sequence by a vertex, with edges recording proximity; we furthermore record the code word, as in Figure~\ref{proxdiagram}
(taken from page 52 of \cite{MR2107253} and modified).
The diagram continues to the right indefinitely, but the essential
information is visible in that portion to the left of the last critical symbol.

\begin{figure}[!htb]
\begin{center}
\includegraphics[scale=0.8]{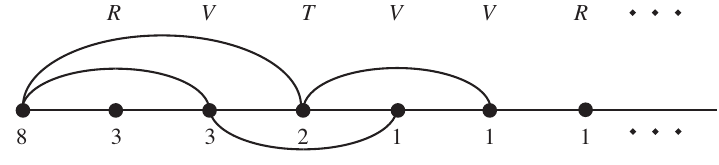}
\end{center}
\caption{A proximity diagram.}
\label{proxdiagram}
\end{figure}

We observe that the code word determines the diagram.
The edges between successive vertices are automatic.
The other edges are determined in the following way:
for each substring $VT^\tau$, let $i$ be the position 
of the vertex corresponding to the $V$,
and draw edges from each vertex of the substring to the vertex 
in position $i-2$.
The multiplicities to the right of the last critical symbol are 1's.

\begin{theorem} \label{wallmultformula}

The multiplicity $m_i$ is given by
$\sum m_j$,
summing over all $j$ for which $p_j$ is proximate to $p_i$.
\end{theorem}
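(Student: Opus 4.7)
The plan is to verify $m_i = \sum_{p_j \text{ proximate to } p_i} m_j$ by a direct case analysis on the initial symbols of the (infinite) code word of the focal curve germ at level $i$, matching the three cases of the front-end recursion of Theorem~\ref{PCfront}.

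First I will reduce to the case $i = k$. The proximity relation is intrinsic to the point sequence --- $p_j$ proximate to $p_i$ records whether $p_j$ lies on a prolongation of $I_{i+2}$ --- and each multiplicity $m_j$ depends only on the germ $C(j)$. The code word of the lifted germ $C(i)$ is $L^{i-k}(W)$, where $L$ is the lifting operation of Section~\ref{redux}, so the claim at $p_i$ for $W$ is equivalent to the claim at the root vertex of the proximity diagram for $L^{i-k}(W)$. The construction of proximity edges then shows that the vertices proximate to $p_k$, other than $p_{k+1}$, are exactly those of the $VT^\tau$ substring (if any) starting at position~2 of $W$.

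I will then carry out the case split dictated by position~2. In \textit{Case A} (where $W$ begins with $RR$) only $p_{k+1}$ is proximate, and Theorem~\ref{PCfront}(A) gives $m_k = m_{k+1}$. In \textit{Case B} (where $W$ begins with $RVT^\tau R$ or equals $RVT^\tau$) the $VT^\tau$ substring produces $\tau+2$ proximate points $p_{k+1}, \ldots, p_{k+\tau+2}$; Theorem~\ref{PCfront}(B) gives $m_k = (\tau+2)\,m_{k+1}$, and since $L^j(W)$ begins with $R^{\tau+3-j}$ for $j = 1, \ldots, \tau+1$, successive applications of Theorem~\ref{PCfront}(A) force $m_{k+1} = m_{k+2} = \cdots = m_{k+\tau+2}$. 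In \textit{Case C} (where $W$ begins with $RVT^\tau V$) the proximate set is the same $p_{k+1}, \ldots, p_{k+\tau+2}$, since the next $V$ at position $\tau+3$ draws edges to position $\tau+1$ rather than to position~0; Theorem~\ref{PCfront}(C) gives $m_k = \lambda_1$ where $[\lambda_0; \lambda_1, \ldots] = \PC(L(W))$. To finish this case I will first establish the identity $\PC(U)_1 = \PC(U)_0 + \PC(L(U))_0$ for any word $U$ beginning with $RV$ (a direct one-line check against Theorem~\ref{PCfront}(B) and (C)), then apply it to $U = L^{\tau+1}(W)$ and propagate the result up the intervening chain of case-A steps (each of which increments the second Puiseux entry by $\lambda_0$ while preserving the first), arriving at $m_k = (\tau+1)\lambda_0 + m_{k+\tau+2}$, which is exactly $\sum_{s=1}^{\tau+2} m_{k+s}$.

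The main obstacle will be the bookkeeping in Case~C, where one must simultaneously track the first and second Puiseux entries across the chain of case-A steps between the two critical blocks. Isolating the lemma for words of the form $RV\ldots$ reduces that bookkeeping to a brief arithmetic check valid in both sub-subcases; what requires care is correctly identifying which symbols of $W$ contribute which proximity edges and aligning them with the corresponding case of the front-end recursion.
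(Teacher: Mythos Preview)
Your proposal is correct and follows essentially the same route as the paper: a three-way case split on the initial block of $W$ according to Theorem~\ref{PCfront}, verifying in each case that $m_k$ equals the sum of the multiplicities at the proximate vertices. The paper frames the argument as an induction on word length (invoking the inductive hypothesis in case~(B) to obtain the equal multiplicities $\lambda_0$ along the $VT^\tau$ block), whereas you reduce to $i=k$ and derive those same equalities directly from repeated use of Theorem~\ref{PCfront}(A); in case~(C) the paper applies Theorem~\ref{ABCrules}(B) or (C) directly to compute $m_{k+\tau+2}=\lambda_1-(\tau+1)\lambda_0$, while your small identity $\PC(U)_1=\PC(U)_0+\PC(L(U))_0$ packages the same computation uniformly. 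These are cosmetic differences; the underlying argument is the same.
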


\begin{proof} We prove this by induction on the length of the code word $W$.
In the inductive step we use the three
cases of Theorems~\ref{ABCrules} and \ref{PCfront}.
Let the Puiseux characteristic of $L(W)$ be
$[\lambda_0;\lambda_1,\dots,\lambda_g]$.
In case (A) the word begins with $RR$; thus 
in the proximity diagram the leftmost vertex is connected
only to its neighbor to the right.  Theorem~\ref{PCfront}(A)
tells us that the multiplicities of these two vertices
are both $\lambda_0$.
In case (B), the leftmost portion of the proximity diagram is as shown in
Figure~\ref{pdb}.
\begin{figure}[!htb]
\begin{center}
\includegraphics[scale=0.7]{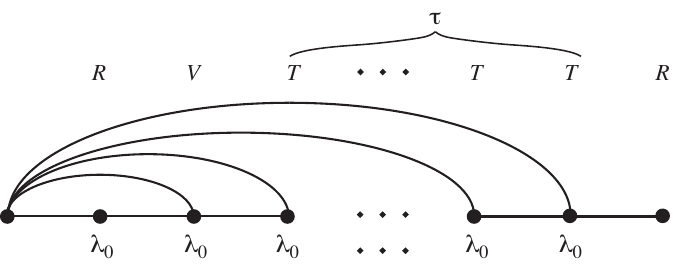}
\end{center}
\caption{Left portion of the proximity diagram in case (B).
There may be additional edges going rightward from the rightmost
vertex labeled $T$, but they are irrelevant to the argument.}
\label{pdb}
\end{figure}
\begin{figure}[!htb]
\begin{center}
\includegraphics[scale=0.7]{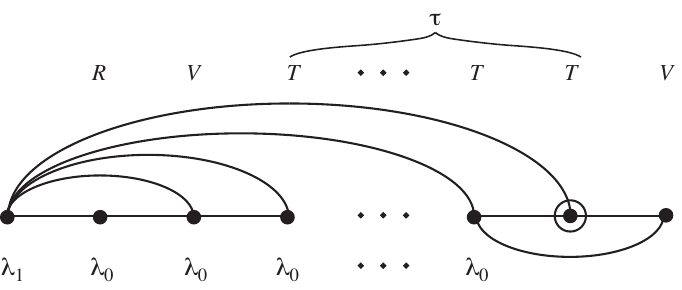}
\end{center}
\caption{Left portion of the proximity diagram in case (C).
There may be additional edges going rightward from the two rightmost
vertices labeled $T$, but again they are irrelevant.}
\label{pdc}
\end{figure}
There the inductive hypothesis has been used to infer
that all the indicated multiplicities are the same.
Theorem~\ref{PCfront}(B) tells us that the multiplicity
at the left vertex is $(\tau+2)\lambda_0$,
in accord with the formula of Theorem~\ref{wallmultformula}.
In case (C), Theorem~\ref{PCfront}(C)
tells us that the leftmost multiplicity is $\lambda_1$.
Thus the inductive hypothesis tells us that
the leftmost portion of the proximity diagram
is as shown in Figure~\ref{pdc}.
By Theorem~\ref{ABCrules}(A) we infer that
\[
PC(L^i(W)) = PC(R^{\tau}V\cdots)=[\lambda_0,\lambda_1-(i-1)\lambda_0,\dots]
\]
for $1\le i \le \tau+1$;
then by Theorem~\ref{ABCrules}(B) or (C)
we find that the first entry of 
$PC(L^{\tau+2}(W))$
is $(\lambda_1-\tau\lambda_0)-\lambda_0
=\lambda_1-(\tau+1)\lambda_0$.
Thus this is the multiplicity associated with the circled vertex.
Again this is in accord with 
the formula of Theorem~\ref{wallmultformula}.
 \end{proof}

\section{Vertical orders} \label{vert}

For a regularizable focal curve germ $C(k)$ located at $p_k\in S_k$,
let $C(r)$ be its lift to the regularization level $r$.
For $k+2 \le j \le r$,
we define the \emph{vertical order} $\VO_j$ to be the intersection
number at $p_r$ of $C(r)$ and the divisor at infinity $I_j$:
\[
\VO_j := (C(r) \cdot I_j)_{p_r}
\]
Recall that the divisor at infinity $I_j$ first appears at level $j$
of the tower. To interpret the intersection number in our definition,
one can either use its inverse image on $S(r)$
--- it is convenient to continue denoting it by $I_j$ ---
or one can project $C(r)$ to a curve germ on $S(j)$
(likely a singular curve)
and compute the intersection number there.
By the standard projection formula (for example, see \cite[p.\ 25]{MR0735435} or \cite[Example 8.1.7]{MR1644323}),
these procedures give the same result.
As a practical matter one computes the vertical order by using a parameterization
of $C(r)$, and $\VO_j$ is the order of vanishing of
the function defining the divisor at infinity $I_j$.

The  \emph{vertical orders vector} is
\[\VO(C(k))=(\VO_{k+2},\VO_{k+3},\dots,\VO_r)\]
and
the  \emph{restricted vertical orders vector} is
\[\RO(C(k))=(\VO_{k+3},\VO_{k+4},\dots,\VO_r).\]
We will soon show that these vectors just depend on the word
$W$ associated with $C(r)$; thus the notations $\VO(W)$
and $\RO(W)$ may be used.
In fact $\RO(W)$ is determined by the Goursat word $G(W)$
and can thus be interpreted as an invariant of Goursat distributions;
this is why we introduce this invariant, which will appear again in our subsequent paper.

\begin{example} \label{VOexample}
Consider
a plane curve with code word $W=RVVVRVT$,
equivalently Puiseux characteristic $[15;24,25]$.
One such curve is $x=t^{15}, y=t^{24}+t^{25}$;
alternatively one can obtain such a curve by starting
in the chart $\cC(oiiioio)$
with the parameterizations of the active variables
$x^{(3)}=1+t$ and $y^{(4)}=t$
and then integrating appropriately.
One deduces the following orders of vanishing:
\begin{align*}
\ord_t(x) &= 15 \\
\ord_t(y) &= 24 \\
\ord_t(y') &= 9 \\
\VO_2=\ord_t(x') &= 6 \\
\VO_3=\ord_t(y'') &= 3 \\
\VO_4=\ord_t(x'') &= 3 \\
\ord_t(x^{(3)}) &= 0 \\
\VO_6=\ord_t(y^{(3)}) &= 2 \\
\ord_t(y^{(4)}) &= 1 
\end{align*}
The fifth and seventh lifts of the curve
do not meet the divisors at infinity $I_5$ and $I_7$,
so that the corresponding vertical orders are zero.
Thus $\VO(W)=(6,3,3,0,2,0)$.

For a curve with code word $G(W)=\text{\emph{RRVVRVT}}$,
equivalently Puiseux characteristic $[9;24,25]$,
we find that 
$\VO(G(W))=(0,3,3,0,2,0)$.
\end{example}

When one works with ordinary blowups rather than Nash modifications,
the vertical order has the following interpretation: 
$\VO_j$ is the intersection number of $C_j$ and $E_{j-1}$,
where $C_j$ is the strict transform of the plane curve $C$
and $E_{j-1}$ is the strict transform of the exceptional
divisor arising from the $(j-1)$st blowup.
Again as a practical matter, one computes orders of vanishing,
as illustrated by the following example.

\begin{example}
Working as in Example~\ref{VOexample}
with the curve $x=t^{15}, y=t^{24}+t^{25}$,
the orders of vanishing presented there are the same as the
orders of vanishing of 
$x$, $y$,
$y_1=y/x$,
$x_1=x/y_1$,
$y_2=x_1/y_1$, 
$x_2=x_1/y_2$,
$x_3=x_2/y_2$,
$y_3=y_2/(x_3-1)$,
and
$y_4=y_3/(x_3-1)$.
Here we have used the blowup coordinates explained
in Section~\ref{blowup}.
\end{example}

\begin{theorem}
For $0\le j \le r-2$, we have $\VO_{j+2}=m_j - m_{j+1}$.
Thus the vertical orders depend only on the word associated
with a focal curve germ.
\end{theorem}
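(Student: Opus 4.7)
My plan is to carry out the computation locally in a chart of $S(r)$ that contains the point $p_r$ (and hence the entire sequence of projections $p_0, p_1, \dots, p_r$), using the recursive coordinate system of Section~\ref{monstercoordinates}. Let $\rr_j$ and $\nn_j$ denote the retained and new coordinates at level $j$, parameterize $C(r)$ by $t$, and set
\[
a_j := \ord_t(\rr_j - \rr_j(p_r)), \qquad b_j := \ord_t(\nn_j - \nn_j(p_r)).
\]
By direct differentiation of the recipes of Section~\ref{monstercoordinates}, an ordinary step at level $j+1$ (which the chart forces whenever $b_j \ge a_j$) gives $a_{j+1}=a_j$ and $b_{j+1}=b_j-a_j$, while an inverted step (forced when $a_j \ge b_j$) gives $a_{j+1}=b_j$ and $b_{j+1}=a_j-b_j$; the boundary case $a_j=b_j$ needs separate attention because $\nn_{j+1}$ then picks up a finite nonzero value at $p_r$.

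The first task is to establish the \emph{multiplicity formula} $m_j = \min(a_j, b_j)$. Since every $\rr_i$ is equal to either $\rr_{i-1}$ or $\nn_{i-1}$, the $j+2$ distinct coordinates on a chart at level $j$ are $\rr_0, \nn_0, \nn_1, \dots, \nn_j$, and $m_j$ is the minimum of their orders along $C(j)$. An induction on $j$ shows that the chart-validity constraint makes $\min(a_j, b_j)$ coincide with $a_{j+1}$ in each case (ordinary gives $\min = a_j = a_{j+1}$, inverted gives $\min = b_j = a_{j+1}$), so the inductive hypothesis $m_{j-1}=a_j$ yields $m_j = \min(m_{j-1}, b_j) = \min(a_j, b_j)$. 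A useful corollary is $m_j = a_{j+1}$.

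The second task is to identify $\VO_{j+2}$ directly from the defining equation of the divisor at infinity. When level $j+2$ is inverted, $I_{j+2}$ is locally cut out by $\nn_{j+2} = d\rr_{j+1}/d\nn_{j+1}$, and a leading-term computation gives $\ord_t(\nn_{j+2}) = a_{j+1} - b_{j+1}$, a positive integer in this case. When level $j+2$ is ordinary, the chart covers precisely those points whose tangent direction at $p_{j+1}$ is not purely vertical in $S(j+1)\to S(j)$, so the chart entirely misses $I_{j+2}$ and $\VO_{j+2}=0$. The two cases combine into $\VO_{j+2}=\max(a_{j+1}-b_{j+1},0) = a_{j+1} - \min(a_{j+1},b_{j+1}) = m_j - m_{j+1}$, which is the desired identity.

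The main obstacle will be the boundary case $a_{j+1}=b_{j+1}$, which arises at the regularization level and breaks the clean subtraction formula because $\nn_{j+2}$ then has a finite nonzero value at $p_r$ rather than vanishing. I would handle it by observing that $p_r \notin I_{j+2}$ in that situation, so $\VO_{j+2}=0$, while simultaneously $m_j = a_{j+1} = b_{j+1} = m_{j+1}$, so both sides of the formula vanish and it still holds. The closing assertion that the vertical orders depend only on the code word is then immediate, since the multiplicity sequence is determined by the Puiseux characteristic of the code word via Theorem~\ref{ABCrules}, as recorded in Section~\ref{multseq}.
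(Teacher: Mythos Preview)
Your argument is correct and takes a genuinely different route from the paper's. The paper invokes the three cases of Theorem~\ref{ABCrules}: it reads off $m_j$ and $m_{j+1}$ as the leading entries of $\PC(W)$ and $\PC(L(W))$, and in each case checks whether the second symbol is $R$ (giving $\VO_{j+2}=0$) or $V$ (giving $\VO_{j+2}=2\lambda_0-\lambda_1$). Your approach bypasses the Puiseux machinery entirely, working directly with the raw orders $a_j,b_j$ of the active coordinates in a fixed chart; the key identities $m_j=\min(a_j,b_j)$ and $m_j=a_{j+1}$ follow just from the chart-validity constraint, and $\VO_{j+2}$ is read straight off the defining equation of $I_{j+2}$. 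This is more elementary and independent of Section~\ref{PC}, while the paper's version ties the result more tightly to the code-word case analysis that drives the rest of the exposition.

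One point to correct: the boundary case $a_{j+1}=b_{j+1}$ is \emph{not} confined to the regularization level. For the ramphoid cusp $x=t^2$, $y=t^4+t^5$ you have $a_1=b_1=2$ already at level~$1$, while $r=3$; the next coordinate $y''=2+\tfrac{15}{4}t$ picks up a nonzero constant and your subtraction formula $b_2=b_1-a_1$ fails (the true value is $b_2=1$). Fortunately this does no damage: your core argument never uses the $b$-recursion, only the $a$-recursion $a_{j+1}\in\{a_j,b_j\}$ (which is exact, being literally $\rr_{j+1}\in\{\rr_j,\nn_j\}$) together with the direct definitions of $a_{j+1},b_{j+1}$. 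Your handling of the boundary case---$p_r\notin I_{j+2}$, hence $\VO_{j+2}=0$, while $m_j=a_{j+1}=b_{j+1}=m_{j+1}$---is valid wherever the equality occurs, not just at level $r$. So the proof stands; just drop the claim that the boundary case is confined to the regularization level, and note that the $b$-recursion is heuristic scaffolding rather than a load-bearing step.
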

\begin{proof}
Once again we refer to the three cases of 
Theorem~\ref{ABCrules},
letting $W$ be the word associated with $C_j$.
In case (A), the multiplicity $m_j$ is the leading entry of
$PC(W)$ and $m_{j+1}$ is the leading entry of
$PC(L(W))$;
both of them are $\lambda_0$.
Since $\lambda_1>2 \lambda_0$,
$C_{j+2}$ does not meet the divisor at infinity $I_{j+2}$
and thus $VO_{j+2}=0$.
In cases (B) and (C),
we have $m_j=\lambda_0$
and $m_{j+1}=\lambda_1-\lambda_0$.
To see the germ $C_{j+1}$, we need to make the inverted choice,
and the order of vanishing of the new coordinate
(whose vanishing gives the divisor at infinity)
is $\lambda_0-(\lambda_1-\lambda_0)$.
\end{proof}


\section{The small growth sequence} \label{sgs}

Suppose that $D$ is a 
distribution 
on a manifold $M$; let $\cE$ be its sheaf of sections.
We consider its
\emph{small growth sequence}:
$$
\cE = \cE^1 \subset \cE^2 \subset \cE^3 \subset \dots
$$
defined by
$$
\cE^j = [\cE^{j-1},\cE],
$$
meaning the subsheaf of $\Theta_M$ whose sections are generated by
Lie brackets of sections of $\cE^{j-1}$
with sections of $\cE$ and by the sections of $\cE^{j-1}$.
Note how this differs from the Lie square sequence in (\ref{Liesquaresheaves}): at each step we form
Lie brackets with vector fields from the beginning distribution.
For each point $p \in X$ we let
$\SG_i$ denote the rank of $\cE^i$ at $p$;
we call
\[
\SG(D,p) = \SG_1,\SG_2,\ldots
\]
the \emph{small growth vector}.
Simple examples show that this vector may differ from point to point of $M$.

In our subsequent paper, we will consider the small growth vector of a Goursat distribution and related invariants
of the small growth sequence, as indicated in the bottom box of Figure~\ref{invdiagram}, as well as their connections to the invariants considered in this paper.


\bibliography{mybib}{}
\bibliographystyle{plain}


\end{document}